\documentclass[11pt,reqno]{amsart}
\usepackage{lmodern}
\usepackage{microtype}
\usepackage[a4paper,margin=29mm]{geometry}
\usepackage{amssymb}
\usepackage{xcolor}
\usepackage[colorlinks=true,linkcolor=blue!45!black,citecolor=blue!45!black,urlcolor=blue!45!black]{hyperref}
\hypersetup{pdftitle={Quantitative Merino--Welsh inequalities for joins},pdfauthor={Jungang Chen and Jiaxin Xie},pdfsubject={Tutte polynomial, graph joins, effective resistance, spanning trees}}
\newtheorem{theorem}{Theorem}[section]
\newtheorem{lemma}[theorem]{Lemma}
\newtheorem{proposition}[theorem]{Proposition}
\newtheorem{corollary}[theorem]{Corollary}
\numberwithin{equation}{section}
\newcommand{\one}{\mathbf{1}}

\title{Quantitative Merino--Welsh inequalities for joins}
\author{Jungang Chen}
\author{Jiaxin Xie}
\thanks{\textsuperscript{*}Corresponding author}
\makeatletter
\let\addresses\@empty
\renewcommand{\@setauthors}{%
  \par\vspace{1.2ex}%
  \noindent\makebox[\textwidth][c]{%
    \begin{minipage}[t]{0.46\textwidth}
      \centering
      {\small\scshape Jungang Chen\textsuperscript{*}\par}
      \vspace{0.5ex}
      {\footnotesize School of Mathematical Sciences\par
       Xiamen University\par
       Xiamen, P.~R.~China\par
       \href{mailto:jgchen@stu.xmu.edu.cn}{jgchen@stu.xmu.edu.cn}\par}
    \end{minipage}%
    \hspace{0.04\textwidth}%
    \begin{minipage}[t]{0.46\textwidth}
      \centering
      {\small\scshape Jiaxin Xie\par}
      \vspace{0.5ex}
      {\footnotesize School of Mathematical Sciences\par
       Xiamen University\par
       Xiamen, P.~R.~China\par
       \href{mailto:jiaxinxie@stu.xmu.edu.cn}{jiaxinxie@stu.xmu.edu.cn}\par}
    \end{minipage}%
  }%
  \par\vspace{1ex}%
}
\makeatother
\subjclass{Primary 05C31; Secondary 05C05, 05C20, 05C50}
\keywords{Tutte polynomial, Merino--Welsh inequality, graph joins, effective resistance, spanning trees, acyclic orientations}

\begin{document}
\begin{abstract}
For a connected graph \(G\), let
\[
Q(G)=\frac{T(G;2,0)T(G;0,2)}{T(G;1,1)^2}.
\]
We obtain quantitative lower bounds for \(Q\) under the graph join operation. If \(A\) and \(B\) are arbitrary simple graphs of orders \(3\le a\le b\), then \(Q(A\vee B)\) admits an explicit lower bound depending only on \(a\) and \(b\), and this bound is strictly greater than \(1\). We further quantify the improvement produced by edges inside the two factors. For every simple graph \(F\), with \(n=|V(F)|+2\ge4\), we prove
\[
Q(K_2\vee F)\ge \frac{27}{n^2}\left(\frac32\right)^{n-4}.
\]
Consequently, every join of at least three nonempty factors, and every complete multipartite graph with at least one edge and no cut edges, satisfies the strict multiplicative Merino--Welsh inequality. The proofs combine orientation estimates with spanning-tree comparisons based on effective resistance and block elimination.
\end{abstract}
\maketitle

\section{Introduction and main results}

Unless otherwise stated, all graphs in this paper are finite and simple, and our graph-theoretic terminology and notation follow Bondy and Murty~\cite{BondyMurty2008}.
In particular, $v(G)=|V(G)|$ and $e(G)=|E(G)|$ denote the order and the number of edges of a graph $G$, respectively, while $\alpha(G)$ and $\alpha^{*}(G)$ denote the numbers of acyclic orientations and \emph{totally cyclic orientations} of $G$, respectively. For a connected graph $G$, $\tau(G)$ denotes its number of spanning trees. These counts are evaluations of the Tutte polynomial:
\[
\alpha(G)=T(G;2,0),\qquad
\alpha^{*}(G)=T(G;0,2),\qquad
\tau(G)=T(G;1,1).
\]

Concerning these counts, Merino and Welsh~\cite{MerinoWelsh1999} proposed the following conjecture in 1999: for every connected graph $G$ without cut edges,
\[
\max\{\alpha(G),\alpha^{*}(G)\}\ge \tau(G).
\]
Conde and Merino~\cite[Conjecture~2.2]{CondeMerino2009} proposed a stronger multiplicative form in 2009, namely, under the same assumptions,
\[
\alpha(G)\alpha^{*}(G)\ge\tau(G)^2.
\]
Let $Q(G)=(\alpha(G)\alpha^{*}(G))/\tau(G)^2$; then the preceding inequality is equivalent to $Q(G)\ge1$. This multiplicative inequality clearly implies the maximum inequality above.

Thomassen~\cite{Thomassen2010} proved the maximum form under suitable sparsity or density conditions. The multiplicative conjecture was subsequently verified for several important graph classes, including 2-connected threshold graphs~\cite{CondeMerino2009},
series-parallel graphs without cut edges~\cite{NobleRoyle2014}, and graphs of pathwidth at most three without cut edges~\cite{Ok2016}.
In the matroid setting, the corresponding multiplicative inequality has also been proved for loopless and coloopless lattice path matroids~\cite{KnauerEtAl2018}
and split matroids~\cite{FerroniSchroter2023}. These studies largely concern the lower bound $1$ for graphs or matroids with special structures, whereas here we focus on lower bounds for $Q(G)$ when two arbitrary graphs are combined by the join operation and obtain bounds stronger than $1$. These bounds depend on both the orders of the factors and their numbers of internal edges.

Recall that the \emph{join} of graphs $A$ and $B$, denoted by $A\vee B$, is obtained from the disjoint union of copies of the two graphs by adding all edges between their vertex sets; the graphs $A$ and $B$ are called the two factors of $A\vee B$. Throughout, we assume that each factor has at least one vertex. An empty graph means a graph with no edges.

For an integer $s\ge2$, write $\lambda_s=(s+1)(2^s-2)/s^2$.
Let $D_3=34/15$; for $a\ge4$, write
\[
D_a=\frac{34}{15}\prod_{j=3}^{a-1}
\left(\lambda_j\lambda_{j+1}
\left(\frac{j}{j+1}\right)^{4j-2}\right).
\]
We first consider joins whose factors both have order at least three. Using electrical networks to bound the growth in the number of spanning trees, we obtain a lower bound that depends only on the factor orders, not on their internal edge sets:

\begin{theorem}\label{thm:pair}
Let $A,B$ be simple graphs whose orders satisfy $3\le a\le b$. Then
\begin{equation}\label{eq:pair-bound}
Q(A\vee B)\ge
D_a\lambda_a^{b-a}\left(\frac ab\right)^{2a-2}.
\end{equation}
\end{theorem}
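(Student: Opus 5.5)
The plan is a double induction that adds one vertex at a time and tracks the three counts $\alpha$, $\alpha^{*}$ and $\tau$ separately. The shape of \eqref{eq:pair-bound} guides the steps. Write $f(a,b)=D_a\lambda_a^{b-a}(a/b)^{2a-2}$. Then
\[
\frac{f(a,b+1)}{f(a,b)}=\lambda_a\Bigl(\frac{b}{b+1}\Bigr)^{2a-2},\qquad
\frac{f(a+1,a+1)}{f(a,a)}=\lambda_a\lambda_{a+1}\Bigl(\frac{a}{a+1}\Bigr)^{4a-2}.
\]
The diagonal ratio is exactly the factor appearing in $D_{a+1}/D_a$. So I would prove:
\begin{itemize}
\item[(i)] a base case $Q(A\vee B)\ge 34/15$ when $a=b=3$;
\item[(ii)] a one-vertex step: if $v$ is added to the larger factor $B$ of $G=A\vee B$, then $Q$ is multiplied by at least $\lambda_a(b/(b+1))^{2a-2}$;
\item[(iii)] the diagonal step, obtained by applying a step of type (ii) twice, once to each side.
\end{itemize}
Each of the two one-vertex steps in (iii) contributes one factor $\lambda_j$ or $\lambda_{j+1}$ and a power of $j/(j+1)$. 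I would check that the exponents add up to $4a-2$.

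\emph{Step (ii), acyclic and totally cyclic orientations.} Let $G'=G+v$, where $v$ is adjacent to all of $A$ and to some set of vertices in $B$, and let $d=\deg_{G'}v\ge a$.
\begin{itemize}
\item Fix an acyclic orientation of $G$ and take a linear extension of it. Inserting $v$ into any of the $d+1$ gaps determined by its neighbours gives distinct acyclic orientations of $G'$, so $\alpha(G')\ge (d+1)\alpha(G)$.
\item Fix a totally cyclic orientation of $G$. Any orientation of the $d$ edges at $v$ that is neither all-in nor all-out keeps every edge on a directed cycle: $v$ has an in-neighbour $x$ and an out-neighbour $y$, and $G$ is strongly connected. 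Hence $\alpha^{*}(G')\ge (2^d-2)\alpha^{*}(G)$.
\end{itemize}

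\emph{Step (ii), spanning trees.} I would bound $\tau(G')/\tau(G)$ using effective resistance. Attach the edges at $v$ one at a time. The first edge is pendant and leaves $\tau$ unchanged. The $k$-th edge $vu$ multiplies $\tau$ by $1+R_k$, where $R_k$ is the effective resistance between $v$ and $u$ in the current graph. By Rayleigh monotonicity, $R_k$ is at most the resistance in a subgraph consisting of the $k-1$ edges already at $v$ and the complete bipartite part between $A$ and $B$. There, any two vertices of $A$ are joined by $b$ parallel paths of length two, giving resistance at most $2/b$; a similar bound, of order $1/a$ since $a\le b$, should hold for pairs involving $B$. This gives $R_k\le 1/(k-1)+c/b$ with $c$ of order $1$. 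Multiplying the factors,
\[
\frac{\tau(G')}{\tau(G)}\le \prod_{k=2}^{d}\Bigl(\frac{k}{k-1}+\frac{c}{b}\Bigr)\le d\Bigl(\frac{b+1}{b}\Bigr)^{a-1},
\]
where the last inequality is the target calibration. Combining with the orientation bounds gives
\[
\frac{Q(G')}{Q(G)}\ge \frac{(d+1)(2^d-2)}{d^2}\Bigl(\frac{b}{b+1}\Bigr)^{2a-2}=\lambda_d\Bigl(\frac{b}{b+1}\Bigr)^{2a-2}.
\]
Finally, $\lambda_s$ is increasing for $s\ge3$ (for example $\lambda_3=8/3<\lambda_4=35/8$), so $\lambda_d\ge\lambda_a$ and step (ii) follows.

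\emph{Main obstacle.} I expect the hardest part to be the resistance calibration: making exactly $a-1$ of the factors carry the $(1+1/b)$ correction, while the edges from $v$ into $B$ and the internal edges of $A$ and $B$ do not spoil the count. What I would try first is to order the edges at $v$ so that those into $A$ come first. Their resistances are controlled by the $2/b$ bound, and they produce the $(1+1/b)^{a-1}$ factor after comparing $(k+2/b\cdots)$ with $(k+1)/k\cdot(b+1)/b$. The remaining $d-a$ edges into $B$ should then contribute only telescoping factors $k/(k-1)$, with the extra resistance absorbed using $d\ge a$. For the base case (i), I would do a direct computation over the finitely many pairs of graphs on three vertices each. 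Alternatively, I would apply the same step estimates starting from $K_3\vee K_1$-type graphs. Either way the aim is to confirm that the minimum of $Q$ over all $A\vee B$ with $a=b=3$ is at least $34/15$.
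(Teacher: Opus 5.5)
Your overall scheme matches the paper's: the $34/15$ base case, alternating one-vertex steps along the diagonal, then growing the larger factor. Your two orientation bounds $\alpha(G')\ge(d+1)\alpha(G)$ and $\alpha^{*}(G')\ge(2^d-2)\alpha^{*}(G)$ also coincide with the paper's. Attaching the edges at $v$ one at a time and using Rayleigh monotonicity is a valid way to reduce to $K_{a,b}$.

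\textbf{The gap.} The calibration $\tau(G')/\tau(G)\le d\,(1+1/b)^{a-1}$ is false as soon as $v$ has a neighbour inside its own factor, and this is unavoidable when $B$ has internal edges. For example, take $G=K_{3,3}$ and join $v$ to all of $A$ and to one vertex of $B$. Then $G'$ is $K_{3,4}$ plus one edge inside the $4$-side. So $\tau(G')=432\,(1+2/3)=720$ and $\tau(G')/\tau(G)=80/9$, whereas your target is $4\,(4/3)^2=64/9$.

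In general, suppose $v$ has $r=d-a\ge1$ neighbours in $B$. For $K_{a,b}$ the exact ratio is $(d+1)(1+1/b)^{a-1}(1+1/a)^{r-1}$; the Kron-reduced spectrum is $b,a,d$ with multiplicities $a-1,r-1,1$. So the edges into $B$ do not contribute telescoping factors $k/(k-1)$. They contribute strictly more, and $d\ge a$ cannot absorb this. Hence the intermediate bound $\lambda_d\bigl(b/(b+1)\bigr)^{2a-2}$ is not available.

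Even for $r=0$, a uniform per-edge bound $R_k\le 1/(k-1)+c/b$ fails. Since $R_2=1+2/b$, validity forces $c\ge2$, and $\prod_{k=2}^{a}\bigl(1+\frac{2(k-1)}{kb}\bigr)>(1+1/b)^{a-1}$ for $a\ge3$. You need the exact value $R_k=\frac1{k-1}+\frac{k}{(k-1)b}$ in the comparison graph, whose product telescopes to $a(1+1/b)^{a-1}$.

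\textbf{What is missing.} With the correct tree growth, your combination only gives the factor $\lambda_{a+r}\bigl(\frac{a+r}{a+r+1}\bigr)^2\bigl(\frac a{a+1}\bigr)^{2r-2}\bigl(\frac b{b+1}\bigr)^{2a-2}$. The missing idea is to show that the extra gain from totally cyclic orientations outweighs the extra tree growth, namely
\[
\frac{a^2(2^{a+r}-2)}{(a+r+1)(a+1)(2^a-2)}\Bigl(\frac a{a+1}\Bigr)^{2r-2}\ge1 \qquad(r\ge1).
\]
This is the paper's $F(q,r)\ge1$ in Lemma~\ref{lem:step}. It does not follow from monotonicity $\lambda_d\ge\lambda_a$, and it has little room for $a=3$: its value is about $1.05$ for $r=1,\dots,4$. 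The paper therefore checks small $r$ directly and uses a ratio argument beyond that.

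\textbf{A smaller point.} Step (ii) must be stated for adding a vertex to either side, with factor $\lambda_q\bigl(t/(t+1)\bigr)^{2q-2}$. Here $t$ is the order of the receiving side and $q$ the order of the other side. The second half of each diagonal move adds to the smaller side and contributes $\lambda_{j+1}\bigl(j/(j+1)\bigr)^{2j}$.
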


Write $\eta_s=2s^2/(s+2)^2$. Next, we quantify the effect of internal edges of the factors on the lower bound for $Q$:

\begin{theorem}\label{thm:internal}
For simple graphs $A,B$ of orders $a,b\ge2$, respectively, we have
\begin{equation}\label{eq:internal-bound}
Q(A\vee B)\ge
Q(K_{a,b})\,\eta_b^{e(A)}\eta_a^{e(B)}.
\end{equation}
When $a,b\ge5$, we further have
\[
Q(A\vee B)\ge Q(K_{a,b})\left(\frac{50}{49}\right)^{e(A)+e(B)}.
\]
In this case, $Q(A\vee B)\ge Q(K_{a,b})$, with equality if and only if both factors are empty graphs.
\end{theorem}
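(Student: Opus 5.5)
Starting from $K_{a,b}$, we add the internal edges of $A$ and $B$ one at a time and control how each addition changes $\alpha$, $\alpha^*$ and $\tau$. Write $G$ for an intermediate graph with $K_{a,b}\subseteq G\subseteq A\vee B$, and let $e=uv$ be a new edge with both ends in the $A$-side, which has $a$ vertices. Adding edges never decreases $\alpha^*$: every totally cyclic orientation of $G$ extends to one of $G+e$ by orienting $e$ either way, so $\alpha^*(G+e)\ge 2\alpha^*(G)$ when $u,v$ already lie on a common directed cycle in both directions, and in general $\alpha^*(G+e)\ge\alpha^*(G)$. For acyclic orientations we use deletion--contraction, $\alpha(G+e)=\alpha(G)+\alpha(G/e)$, which gives $\alpha(G+e)\ge\alpha(G)$. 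So the crude bound is $\alpha\alpha^*(G+e)\ge\alpha\alpha^*(G)$, and the loss in $Q$ comes from the growth of $\tau$.

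The spanning-tree factor is controlled by effective resistance. By the standard identity, $\tau(G+e)=\tau(G)\bigl(1+R_G(u,v)\bigr)$, where $R_G(u,v)$ is the effective resistance in $G$. By Rayleigh monotonicity, $R_G(u,v)\le R_{K_{a,b}}(u,v)$. The two vertices $u,v$ lie on the side of size $a$ and have $b$ common neighbours, so $R\le 2/b$ from the $b$ parallel paths of length $2$. This gives
\[
\frac{\tau(G+e)^2}{\tau(G)^2}\le\Bigl(1+\frac 2b\Bigr)^2=\frac{(b+2)^2}{b^2}.
\]
Combined with a gain of a factor $2$ in $\alpha\alpha^*$, this yields exactly $\eta_b=2b^2/(b+2)^2$ per edge of $A$. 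Symmetrically, each edge of $B$ costs at most $(1+2/a)^2$ and gives $\eta_a$. Iterating over $e(A)+e(B)$ edges proves \eqref{eq:internal-bound}.

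\textbf{Main obstacle.} The key step is to show that each added edge multiplies $\alpha\alpha^*$ by at least $2$. I would try to get the full factor $2$ from $\alpha^*$. In $G\supseteq K_{a,b}$ with $b\ge2$, take a totally cyclic orientation $O$ of $G$ and an edge $uv$ inside a side. Orienting $uv$ in either direction keeps $O$ totally cyclic, provided $u$ and $v$ lie in a common strongly connected component in both directions. That holds because a totally cyclic orientation of a connected graph is strongly connected. So $\alpha^*(G+e)\ge 2\alpha^*(G)$, and together with $\alpha(G+e)\ge\alpha(G)$ the factor $2$ follows. This requires $G$ to be connected, which holds since $a,b\ge1$.

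For $a,b\ge5$ we have $\eta_s=2s^2/(s+2)^2\ge 50/49$, since $\eta_s$ is increasing in $s$ and $\eta_5=50/49$. The second bound then follows from $\eta_b^{e(A)}\eta_a^{e(B)}\ge(50/49)^{e(A)+e(B)}$. Because $50/49>1$, equality $Q(A\vee B)=Q(K_{a,b})$ forces $e(A)+e(B)=0$, that is, both factors are empty. Conversely, if both factors are empty then $A\vee B=K_{a,b}$.
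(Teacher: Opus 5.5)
Your proposal is correct and follows essentially the same route as the paper. You build up from $K_{a,b}$ one internal edge at a time, use $\tau(G+uv)=\tau(G)\bigl(1+r_{uv}(G)\bigr)$ with the Rayleigh bound $r_{uv}\le 2/b$ from the $b$ two-edge paths, and gain the factor $2$ from $\alpha^{*}$ via strong connectivity while $\alpha$ does not decrease. The differences are cosmetic: you get $\alpha(G+e)\ge\alpha(G)$ by deletion--contraction rather than the paper's direct extension argument, and you pass through $K_{a,b}$ before reducing to the parallel paths.
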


We next consider joins of the form $K_2\vee F$:

\begin{theorem}\label{thm:universal}
Let $F$ be a simple graph with at least one vertex, and let $G=K_2\vee F$ and $n=v(G)$.
When $n\ge4$,
\begin{equation}\label{eq:universal-bound}
Q(G)\ge \frac{27}{n^2}\left(\frac32\right)^{n-4}.
\end{equation}
When $n=3$, we have $G=K_3$ and $Q(G)=4/3$.
\end{theorem}

Section~\ref{sec:comparison} collects the necessary preliminaries and comparison tools.
Section~\ref{sec:proofs} proves the three theorems above in turn and applies them to joins of at least three factors and to complete multipartite graphs with at least one edge and no cut edges.

\section{Growth in the number of spanning trees and comparisons of \texorpdfstring{$Q$}{Q}}\label{sec:comparison}
\subsection{Adding a vertex}
Throughout this subsection, we assume that $G$ is obtained from a connected graph $H$ by adding a new vertex $v$
with neighbour set $S\subseteq V(H)$, where $d=|S|\ge2$.

For acyclic orientations, the work of Kahal\'e and Schulman~\cite[p.~12, Lemma~4, in the author manuscript]{KahaleSchulman1996}
gives the following special case with the parameter $w=1$:

\begin{lemma}\label{lem:acyclic-orientations}
We have $\alpha(G)\ge(d+1)\alpha(H)$.
\end{lemma}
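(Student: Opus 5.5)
We need to prove $\alpha(G)\ge (d+1)\alpha(H)$, where $G$ is obtained from $H$ by adding a vertex $v$ with $d=|S|\ge2$ neighbours. The plan is to exhibit, for each acyclic orientation of $H$, at least $d+1$ distinct acyclic orientations of $G$ extending it. Different orientations of $H$ give different extensions, so summing over $\alpha(H)$ orientations yields the bound.

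Fix an acyclic orientation $\omega$ of $H$. It induces a strict partial order on $V(H)$: write $x<y$ if there is a directed path from $x$ to $y$. Restricted to $S$, choose a linear extension $s_1,\dots,s_d$ of this order on $S$. Such an extension exists because the reachability relation of a DAG is acyclic. For $i=0,1,\dots,d$, orient the edges at $v$ as follows:
\[
s_j\to v \quad (j\le i),\qquad v\to s_j\quad (j>i).
\]

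\textbf{Each of these orientations is acyclic.} Suppose a directed cycle existed. It must pass through $v$, so it has the form $v\to s_j \leadsto s_k\to v$ with $j>i\ge k$, where $s_j\leadsto s_k$ is a directed path in $\omega$. That path means $s_j<s_k$ in the reachability order, so $j<k$ in the linear extension. This contradicts $j>k$.

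\textbf{The $d+1$ orientations are distinct.} They differ in how many edges at $v$ point into $v$, namely $i$. So $\omega$ has at least $d+1$ acyclic extensions.

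Distinct $\omega$ give disjoint families of extensions, since restricting an extension to $H$ recovers $\omega$. Summing over all $\omega$ gives $\alpha(G)\ge (d+1)\alpha(H)$.

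The only delicate point is the choice of the threshold orientations via a linear extension of reachability restricted to $S$. With that choice the verification is immediate. The argument uses neither $d\ge2$ nor connectivity of $H$, which matches the $w=1$ case of Kahal\'e--Schulman.
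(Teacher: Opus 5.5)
Your proof is correct and complete. You can avoid the extra check of $s_j\ne s_k$, because $j>i\ge k$ already forces $j>k$, so the directed path from $s_j$ to $s_k$ in $H$ has positive length, as you need.

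It takes a genuinely different route from the paper, which gives no argument of its own for this lemma. The paper imports the $w=1$ specialisation of a more general lemma of Kahal\'e and Schulman. Your proof is self-contained and elementary. For a fixed acyclic orientation $\omega$ of $H$, the acyclic extensions to $G$ are exactly the choices of in-neighbour set $I\subseteq S$ of $v$ that are down-sets of the reachability order of $\omega$ restricted to $S$. The $d+1$ prefixes of a linear extension supply $d+1$ such down-sets.

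Your approach makes visible that neither connectivity of $H$ nor $d\ge2$ is needed. It also shows when the bound is sharp: a $d$-element poset has exactly $d+1$ down-sets if and only if it is a chain. So the count is attained for every $\omega$ precisely when $S$ is a clique, since an acyclic orientation of a clique totally orders it. The citation, in exchange, is shorter and sits inside a weighted family of inequalities that the paper does not otherwise use.
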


For totally cyclic orientations, the work of Conde and Merino~\cite[Lemma~3.2, p.~83]{CondeMerino2009} readily yields the following:

\begin{lemma}\label{lem:cyclic-orientations}
We have $\alpha^{*}(G)\ge(2^d-2)\alpha^{*}(H)$.
\end{lemma}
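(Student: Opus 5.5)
The plan is to prove the bound by an explicit injection. Starting from a totally cyclic orientation of $H$, we extend it to $G$ in $2^d-2$ distinct ways, each of which is totally cyclic. If $\alpha^{*}(H)=0$ (for instance, when $H$ has a cut edge), there is nothing to prove, so assume $\alpha^{*}(H)>0$.

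First recall the standard fact that an orientation of a connected graph is totally cyclic, meaning every edge lies in a directed cycle, if and only if it is strongly connected. Fix a totally cyclic orientation $\vec H$ of $H$; since $H$ is connected, $\vec H$ is strongly connected.

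Next, orient the $d$ edges joining $v$ to $S$. Choose any of the $2^d$ orientations except the two in which all of these edges point into $v$ or all point out of $v$; this leaves $2^d-2$ choices. In each such choice, $v$ has at least one in-neighbour $u\in S$ and at least one out-neighbour $w\in S$; here $d\ge2$ guarantees that mixed choices exist.

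We then check that the resulting orientation $\vec G$ is totally cyclic.
\begin{itemize}
\item Every edge of $H$ already lies in a directed cycle of $\vec H\subseteq\vec G$.
\item Consider an edge $x\to v$ with $x\in S$. Take any out-neighbour $w$ of $v$. Strong connectivity of $\vec H$ gives a directed path from $w$ to $x$ in $\vec H$ (trivial if $w=x$, which cannot happen since the edge $vx$ has only one direction). Hence $x\to v\to w\leadsto x$ is a directed cycle containing $x\to v$.
\item Symmetrically, an edge $v\to y$ lies on the directed cycle $v\to y\leadsto u\to v$, where $u$ is any in-neighbour of $v$.
\end{itemize}

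Finally, distinct pairs (orientation $\vec H$, admissible choice at $v$) give distinct orientations of $G$, since the pair is recovered by restricting $\vec G$ to $E(H)$ and to the edges at $v$. This yields $\alpha^{*}(G)\ge(2^d-2)\alpha^{*}(H)$. The only step needing care is the cycle-closing argument, which relies on strong connectivity of $\vec H$; this is exactly why connectivity of $H$ is assumed.
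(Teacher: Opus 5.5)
Your proof is correct and complete. You keep a strongly connected orientation $\vec H$ of $H$, orient the $d$ new edges in any of the $2^d-2$ non-constant ways, and close each new edge into a directed cycle through a path in $\vec H$; this is the extension argument behind Conde and Merino's Lemma~3.2, which the paper cites without reproving, and your write-up supplies the omitted details (injectivity, and $w\neq x$ because the graph is simple).
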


Next, we use electrical networks to give an upper bound on the growth in the number of spanning trees. Assign unit resistance, or equivalently unit conductance, to each edge.
Denote the Laplacian matrix of a graph $H$ by $\mathbf C(H)=\mathbf D(H)-\mathbf A(H)$,
where $\mathbf D(H)$ is the diagonal degree matrix and $\mathbf A(H)$ is the adjacency matrix.
For a weighted graph, $\mathbf A(H)$ records the edge conductances, and each diagonal entry of $\mathbf D(H)$ is the sum of the conductances of the edges incident with the corresponding vertex.
Below, $\mathbf I_k$ and $\one_k$ denote the identity matrix of order $k$ and the all-ones column vector of dimension $k$, respectively; subscripts are omitted when the dimensions are clear.

When a unit current is injected at vertex $x$ and withdrawn at $y$, the potential difference between the two vertices is called the \emph{effective resistance}, denoted by $r_{xy}(H)$. Since $H$ is connected, by~\cite[equation~(3)]{DorflerBullo2013},
it has the matrix expression
\[
r_{xy}(H)=(\mathbf e_x-\mathbf e_y)^T \mathbf C(H)^+(\mathbf e_x-\mathbf e_y),
\]
where $\mathbf e_x,\mathbf e_y$ are the corresponding standard unit column vectors,
and $\mathbf C(H)^+$ is the Moore--Penrose pseudoinverse.

Let $U=V(H)\setminus S$. Order the vertices according to $S,U$ and write $\mathbf C(H)$ in block form as
\[
\mathbf C(H)=
\begin{pmatrix}
\mathbf C_{SS}&\mathbf C_{SU}\\
\mathbf C_{US}&\mathbf C_{UU}
\end{pmatrix}.
\]
When $U$ is nonempty, $\mathbf C_{UU}$ is positive definite.
Thus, block elimination with respect to $\mathbf C_{UU}$~\cite{Zhang2005} gives the Kron-reduced matrix onto $S$:
\begin{equation}\label{eq:kron}
\mathbf K=\mathbf C_{SS}-\mathbf C_{SU}\mathbf C_{UU}^{-1}\mathbf C_{US}.
\end{equation}
If $U$ is empty, set $\mathbf K=\mathbf C(H)$. Denote all the positive eigenvalues of $\mathbf K$ by $\mu_2,\ldots,\mu_d$. By~\cite[Lemma~II.1 and Theorem~III.8(1)]{DorflerBullo2013}, the matrix $\mathbf K$ is the Laplacian matrix of a connected weighted graph on $S$, and the effective resistance between any two vertices in $S$ is unchanged by the reduction;
that is, for any $x,y\in S$, we have $r_{xy}(H)=r_{xy}(\mathbf K)$. Write
\[\overline r_H(S):=\frac{2}{d(d-1)}
\sum_{\{x,y\}\subseteq S}r_{xy}(H)\]
for the average effective resistance over pairs of vertices in $S$. Combining the resistance invariance under Kron reduction with the identities of Ghosh, Boyd, and Saberi~\cite[Section~2.5, equations~(13) and~(15)]{GhoshBoydSaberi2008} gives the following:

\begin{lemma}\label{lem:average-resistance}
The average effective resistance between pairs of vertices in $S$ satisfies
$\overline r_H(S)=\frac{2}{d-1}\sum_{i=2}^d\frac1{\mu_i}$.
\end{lemma}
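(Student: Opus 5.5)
We prove Lemma~\ref{lem:average-resistance} by moving everything onto $\mathbf K$ and then using standard identities for Laplacians of connected weighted graphs.

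\emph{Step 1: reduce to $\mathbf K$.} By the resistance invariance under Kron reduction quoted above from~\cite{DorflerBullo2013}, $r_{xy}(H)=r_{xy}(\mathbf K)$ for all $x,y\in S$. Hence
\[
\overline r_H(S)=\frac{2}{d(d-1)}\sum_{\{x,y\}\subseteq S} r_{xy}(\mathbf K),
\qquad
r_{xy}(\mathbf K)=(\mathbf e_x-\mathbf e_y)^T\mathbf K^+(\mathbf e_x-\mathbf e_y).
\]
The same holds trivially when $U$ is empty, since then $\mathbf K=\mathbf C(H)$.

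\emph{Step 2: sum the pairwise resistances.} Since $\mathbf K$ is the Laplacian of a connected weighted graph on $S$, its kernel is spanned by $\one_d$. Because $\mathbf K^+$ is symmetric with the same kernel, $\mathbf K^+\one=0$. Summing over ordered pairs gives
\[
\sum_{x,y\in S}(\mathbf e_x-\mathbf e_y)^T\mathbf K^+(\mathbf e_x-\mathbf e_y)
=2d\operatorname{tr}\mathbf K^+-2\,\one^T\mathbf K^+\one
=2d\operatorname{tr}\mathbf K^+.
\]
Each unordered pair is counted twice, so
\[
\sum_{\{x,y\}\subseteq S}r_{xy}(\mathbf K)=d\operatorname{tr}\mathbf K^+ .
\]
This is the Ghosh--Boyd--Saberi identity, equations~(13) and~(15) of~\cite{GhoshBoydSaberi2008}.

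\emph{Step 3: evaluate the trace.} Connectivity means $\mathbf K$ has exactly $d-1$ positive eigenvalues $\mu_2,\dots,\mu_d$, so $\operatorname{tr}\mathbf K^+=\sum_{i=2}^d 1/\mu_i$. Substituting into Step~1,
\[
\overline r_H(S)=\frac{2}{d(d-1)}\cdot d\sum_{i=2}^d\frac1{\mu_i}=\frac{2}{d-1}\sum_{i=2}^d\frac1{\mu_i}.
\]

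The only point needing care is that $\mathbf K$ is the Laplacian of a \emph{connected} graph, so that its kernel is exactly $\operatorname{span}\one$. This is exactly what~\cite{DorflerBullo2013} supplies, so the argument has no real obstacle.
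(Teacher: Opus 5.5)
Your proposal is correct and follows the same route as the paper: transfer the resistances to $\mathbf K$ via Kron-reduction invariance, then apply the Ghosh--Boyd--Saberi identity $\sum_{\{x,y\}\subseteq S} r_{xy}(\mathbf K)=d\operatorname{tr}\mathbf K^+=d\sum_{i=2}^d 1/\mu_i$. The only difference is that you derive that identity directly from $\mathbf K^+\one=0$ instead of just citing it.
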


\begin{lemma}
The ratio of the numbers of spanning trees satisfies
\begin{equation}\label{eq:tree-growth}
R:=\frac{\tau(G)}{\tau(H)}
=d\prod_{i=2}^d\left(1+\frac1{\mu_i}\right).
\end{equation}
\end{lemma}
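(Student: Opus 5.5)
We apply the Matrix-Tree Theorem to $G$ and $H$ and then eliminate blocks. Order the vertices of $G$ as $v,S,U$. Then $\tau(G)$ is any cofactor of $\mathbf C(G)$. Deleting the row and column of $v$ gives the reduced Laplacian
\[
\mathbf M=\begin{pmatrix}\mathbf C_{SS}+\mathbf I_d&\mathbf C_{SU}\\ \mathbf C_{US}&\mathbf C_{UU}\end{pmatrix},
\]
since each vertex of $S$ gains exactly one incident edge, namely the one to $v$. Thus $\tau(G)=\det\mathbf M$. When $U$ is nonempty, the Schur complement formula with respect to the positive definite block $\mathbf C_{UU}$ gives
\[
\det\mathbf M=\det\mathbf C_{UU}\cdot\det(\mathbf K+\mathbf I_d),
\]
where $\mathbf K$ is the Kron reduction \eqref{eq:kron}; adding $\mathbf I_d$ to the $SS$ block commutes with the elimination. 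If $U$ is empty, then $\mathbf M=\mathbf C(H)+\mathbf I$ directly, and the same formula holds with the convention $\det\mathbf C_{UU}=1$.

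Next we express $\tau(H)$ through the same data. Because $\mathbf K$ is the Laplacian of a connected weighted graph on $S$ (cited earlier), its eigenvalues are $0=\mu_1<\mu_2\le\dots\le\mu_d$, with eigenvector $\one$ for the zero eigenvalue. Hence
\[
\det(\mathbf K+\mathbf I)=\prod_{i=2}^d(1+\mu_i).
\]
For $\tau(H)$, delete the row and column of a vertex $s\in S$ from $\mathbf C(H)$. The principal submatrix obtained by removing $s$ still contains the full $UU$ block, so the Schur complement again gives
\[
\tau(H)=\det\mathbf C_{UU}\cdot\det\bigl(\mathbf K\text{ with row and column }s\text{ deleted}\bigr).
\]
This holds because Kron reduction commutes with deleting a row and column of a kept vertex: the Schur complement of the principal submatrix on $(S\setminus\{s\})\cup U$ is the corresponding principal submatrix of $\mathbf K$. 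By the matrix-tree theorem for weighted Laplacians, that cofactor equals $\frac1d\prod_{i=2}^d\mu_i$.

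Dividing the two expressions, the factor $\det\mathbf C_{UU}$ cancels and
\[
R=\frac{\prod_{i=2}^d(1+\mu_i)}{\frac1d\prod_{i=2}^d\mu_i}=d\prod_{i=2}^d\Bigl(1+\frac1{\mu_i}\Bigr),
\]
which is \eqref{eq:tree-growth}.

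The main obstacle is justifying the commutation step above. We argue it via the quotient property of Schur complements: the Schur complement of $\mathbf C_{UU}$ in a principal submatrix containing $U$ is the corresponding principal submatrix of the full Schur complement $\mathbf K$. This is a standard identity found in Zhang's book, already cited. The remaining steps are routine.
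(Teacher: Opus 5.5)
Your proof is correct and follows the paper's argument step for step. You take the Matrix--Tree cofactor at $v$ for $\tau(G)$ and the Schur complement with respect to $\mathbf C_{UU}$ to get $\det\mathbf C_{UU}\det(\mathbf K+\mathbf I_d)$; you take a cofactor at some $s\in S$ for $\tau(H)$ to get $\det\mathbf C_{UU}\det\mathbf K^{(s)}$; and you use the spectral weighted Matrix--Tree theorem $\det\mathbf K^{(s)}=\frac1d\prod_{i=2}^d\mu_i$.

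The step you flag as the main obstacle is immediate. Deleting row and column $s$ from $\mathbf C_{SS}-\mathbf C_{SU}\mathbf C_{UU}^{-1}\mathbf C_{US}$ is the same as first deleting row and column $s$ of $\mathbf C_{SS}$, row $s$ of $\mathbf C_{SU}$ and column $s$ of $\mathbf C_{US}$, so no appeal to a quotient formula is needed.
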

\begin{proof}
If $U$ is nonempty, delete from $\mathbf C(G)$ the row and column corresponding to the new vertex $v$.
By the Matrix--Tree Theorem~\cite[Corollary~20.16]{BondyMurty2008}, we obtain
\[
\tau(G)=\det
\begin{pmatrix}
\mathbf C_{SS}+\mathbf I_d&\mathbf C_{SU}\\
\mathbf C_{US}&\mathbf C_{UU}
\end{pmatrix}
=\det\mathbf C_{UU}\det(\mathbf K+\mathbf I_d)
=\det\mathbf C_{UU}\prod_{i=2}^d(1+\mu_i).
\]
On the other hand, choose any $s\in S$, delete from $\mathbf C(H)$ the row and column corresponding to $s$,
and partition the resulting principal submatrix into blocks according to $S\setminus\{s\},U$.
Let $\mathbf K^{(s)}$ be the principal submatrix obtained from $\mathbf K$ by deleting the row and column corresponding to $s$. Again by the Matrix--Tree Theorem,
\[
\tau(H)=\det\mathbf C_{UU}\det\mathbf K^{(s)}.
\]
The matrix $\mathbf K$ is the Laplacian matrix of a connected weighted graph on $S$.
By the spectral form of the weighted Matrix--Tree Theorem~\cite[Section~1.1, Theorem~1.2]{DuvalKlivansMartin2009},
we have $\det\mathbf K^{(s)}=\frac{1}{d}\prod_{i=2}^d\mu_i$, and hence
\[
\tau(H)=\frac{\det\mathbf C_{UU}}d\prod_{i=2}^d\mu_i.
\]
Dividing the two expressions proves the claim. If $U$ is empty, take $\det\mathbf C_{UU}$ to be one in all the expressions above.
\end{proof}

\begin{lemma}\label{lem:tree-growth-monotonicity}
Let $H_0$ be a connected spanning subgraph of $H$, and let $G_0$ be obtained from $H_0$ by adding a new vertex $v$
with the same neighbour set $S$. Then
$\frac{\tau(G)}{\tau(H)}\le\frac{\tau(G_0)}{\tau(H_0)}$.
\end{lemma}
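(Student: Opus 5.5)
We will prove that adding edges to $H$ can only decrease the ratio $R$ in \eqref{eq:tree-growth}. Since $H$ arises from $H_0$ by adding edges one at a time, and every intermediate graph is a connected spanning subgraph of $H$ containing $H_0$, it suffices to treat a single added edge. So let $H'=H_0+e$ with $e=xy$, and let $G'=G_0+e$; we show $\tau(G')/\tau(H')\le\tau(G_0)/\tau(H_0)$. (Multigraphs arising in intermediate steps cause no trouble, since the argument below holds verbatim for weighted graphs.)

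The key tool is the deletion--contraction formula $\tau(X+e)=\tau(X)+\tau(X/e)$, together with its electrical form $\tau(X/e)=r_{xy}(X)\,\tau(X)$. Thus
\[
\frac{\tau(H')}{\tau(H_0)}=1+r_{xy}(H_0),\qquad
\frac{\tau(G')}{\tau(G_0)}=1+r_{xy}(G_0).
\]
The desired inequality is equivalent to
\[
\frac{\tau(G')}{\tau(G_0)}\le\frac{\tau(H')}{\tau(H_0)},
\]
that is, to $r_{xy}(G_0)\le r_{xy}(H_0)$. This follows from Rayleigh's monotonicity principle: $G_0$ is obtained from $H_0$ by adding a vertex together with edges, which only adds conductance, so effective resistances between vertices of $H_0$ cannot increase. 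To stay within the paper's framework, one can instead apply Kron reduction to eliminate $v$ from $\mathbf C(G_0)$. This yields $\mathbf C(H_0)$ plus the Laplacian of a weighted complete graph on $S$ with weights $1/d$. The effective resistances of $G_0$ on $V(H_0)$ are therefore those of a supergraph of $H_0$, and the quadratic form $\mathbf b^T\mathbf C^+\mathbf b$ with $\mathbf b=\mathbf e_x-\mathbf e_y$ is monotone decreasing under the Loewner order of the Laplacian on the space orthogonal to $\one$.

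Iterating over the edges of $E(H)\setminus E(H_0)$ and multiplying the per-edge inequalities, the ratios telescope to give $\tau(G)/\tau(H)\le\tau(G_0)/\tau(H_0)$. The only delicate point is justifying the identity $\tau(X/e)=r_{xy}(X)\tau(X)$ and the monotonicity. For the identity, use the Matrix--Tree Theorem: $\tau(X+e)=\det(\mathbf L_{\text{red}}+\mathbf b\mathbf b^T)$ restricted to a reduced Laplacian, and the matrix determinant lemma gives the factor $1+\mathbf b^T\mathbf C^+\mathbf b$. This is the step I expect to need the most care, and everything else is routine.
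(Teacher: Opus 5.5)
Your argument is correct, but it takes a different route from the paper.

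The paper does not add edges one at a time. It Kron-reduces both $H$ and $H_0$ onto $S$ and uses the variational identity $\xi^T\mathbf K\xi=\min_\eta\zeta^T\mathbf C(H)\zeta$ to get $\xi^T\mathbf K_0\xi\le\xi^T\mathbf K\xi$. The min--max principle then makes each positive eigenvalue of $\mathbf K_0$ at most the corresponding eigenvalue of $\mathbf K$, and the product formula \eqref{eq:tree-growth}, $R=d\prod_i(1+1/\mu_i)$, finishes the proof.

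You instead exchange the two ratios, reducing the claim to $\tau(G)/\tau(G_0)\le\tau(H)/\tau(H_0)$. You then compare edge by edge, using $\tau(X+xy)/\tau(X)=1+r_{xy}(X)$ together with Rayleigh monotonicity, $r_{xy}(G_0)\le r_{xy}(H_0)$. That identity is the first part of Lemma~\ref{lem:edge-growth}, whose proof does not need the no-cut-edge hypothesis. Your remark about multigraphs is unnecessary, since every intermediate graph is a simple spanning subgraph of $H$.

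What your route buys: it needs neither the spectral formula \eqref{eq:tree-growth} nor any eigenvalue comparison. It is also more general, because nothing in it uses that the added structure is a single vertex star. It shows that $\tau(X\cup Z)/\tau(X)$ is nonincreasing in $X$ for any fixed attached structure $Z$, which is essentially the Rayleigh (negative-correlation) property of spanning trees.

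What the paper's route buys: it gives the stronger eigenvalue-by-eigenvalue domination $\mu_i(\mathbf K_0)\le\mu_i(\mathbf K)$. It also stays inside the Kron-reduction framework that the paper reuses immediately in Lemma~\ref{lem:bipartite}, where the ratio for $K_{t,q}$ is read off from the eigenvalues of the reduced matrix.
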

\begin{proof}
Let $\mathbf K_0$ and $\mathbf K$ be the Kron-reduced matrices of $H_0$ and $H$ onto $S$, respectively.
When $U$ is nonempty, take $\xi\in\mathbb R^d$ and $\eta\in\mathbb R^{|U|}$
as coordinate vectors on $S$ and $U$, respectively, and write $\zeta=(\xi^T,\eta^T)^T$. Expanding the block matrix expression gives
\[
\begin{pmatrix}\xi\\\eta\end{pmatrix}^{T}
\mathbf C(H)
\begin{pmatrix}\xi\\\eta\end{pmatrix}
=\xi^T\mathbf C_{SS}\xi+2\eta^T\mathbf C_{US}\xi+\eta^T\mathbf C_{UU}\eta.
\]
Fix $\xi$. Since $\mathbf C_{UU}$ is positive definite, differentiation with respect to $\eta$ shows that the right-hand side
attains its unique minimum at $\eta_*=-\mathbf C_{UU}^{-1}\mathbf C_{US}\xi$.
Substituting $\eta_*$ and using~\eqref{eq:kron}, we obtain
\[
\xi^T\mathbf K\xi=\min_{\eta\in\mathbb R^{|U|}}
\begin{pmatrix}\xi\\\eta\end{pmatrix}^{T}
\mathbf C(H)
\begin{pmatrix}\xi\\\eta\end{pmatrix}.
\]
The same argument applies to $H_0$. On the other hand, for any vertex coordinate vector $\zeta$ as above,
\[
\zeta^T\mathbf C(H)\zeta=\sum_{\{a,b\}\in E(H)}(\zeta_a-\zeta_b)^2.
\]
Thus, for any $\xi,\eta$, we have $\zeta^T\mathbf C(H_0)\zeta\le\zeta^T\mathbf C(H)\zeta$.
Fixing $\xi$ and minimizing both sides over $\eta$ gives $\xi^T\mathbf K_0\xi\le\xi^T\mathbf K\xi$.
When $U$ is empty, this inequality follows directly from $\mathbf K_0=\mathbf C(H_0)$ and $\mathbf K=\mathbf C(H)$.
By the min--max principle for eigenvalues~\cite[Chapter~4]{HornJohnson2012}, the eigenvalues of $\mathbf K_0$ on $\one_S^\perp$, listed in nondecreasing order,
are no greater than the corresponding eigenvalues of $\mathbf K$. Combining this with~\eqref{eq:tree-growth} proves the result.
\end{proof}

\begin{proposition}\label{prop:resistance}
With the notation above, we have
\[
R\le d\left(1+\frac{\overline r_H(S)}2\right)^{d-1},\qquad
Q(G)\ge Q(H)\frac{d^2\lambda_d}{R^2}.
\]
Equality holds in the first inequality if and only if all positive eigenvalues of $\mathbf K$ are equal.
\end{proposition}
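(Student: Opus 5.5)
The first inequality follows from \eqref{eq:tree-growth} by AM--GM. The second follows by combining Lemmas~\ref{lem:acyclic-orientations} and~\ref{lem:cyclic-orientations} with the definition of $\lambda_d$.

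For the first inequality, write $R=d\prod_{i=2}^d(1+1/\mu_i)$. The $d-1$ factors $1+1/\mu_i$ are positive, so the arithmetic--geometric mean inequality gives
\[
\prod_{i=2}^d\left(1+\frac1{\mu_i}\right)\le\left(1+\frac1{d-1}\sum_{i=2}^d\frac1{\mu_i}\right)^{d-1}.
\]
By Lemma~\ref{lem:average-resistance}, $\frac1{d-1}\sum_{i=2}^d\mu_i^{-1}=\overline r_H(S)/2$. Substituting this yields $R\le d\,(1+\overline r_H(S)/2)^{d-1}$.

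Equality in AM--GM holds if and only if all the factors $1+1/\mu_i$ coincide. Since $x\mapsto 1+1/x$ is injective on $(0,\infty)$, this happens exactly when $\mu_2=\dots=\mu_d$. One point needs checking: $\mathbf K$ is the Laplacian of a connected weighted graph on $S$ (cited from D\"orfler--Bullo), so it has exactly $d-1$ positive eigenvalues. Hence "all positive eigenvalues equal" is precisely the equality condition.

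For the second inequality, use Lemmas~\ref{lem:acyclic-orientations} and~\ref{lem:cyclic-orientations}:
\[
\alpha(G)\alpha^*(G)\ge(d+1)(2^d-2)\,\alpha(H)\alpha^*(H)=d^2\lambda_d\,\alpha(H)\alpha^*(H),
\]
since $\lambda_d=(d+1)(2^d-2)/d^2$. Dividing by $\tau(G)^2=R^2\tau(H)^2$ gives $Q(G)\ge Q(H)\,d^2\lambda_d/R^2$.

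There is no real obstacle. The only delicate points are that the eigenvalue count is $d-1$, which ensures the AM--GM exponent and Lemma~\ref{lem:average-resistance} match, and the hypothesis $d\ge2$, which ensures $\lambda_d$ is defined and the cited lemmas apply.
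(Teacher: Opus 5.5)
Your proposal is correct and follows the paper's proof exactly: AM--GM applied to \eqref{eq:tree-growth} together with Lemma~\ref{lem:average-resistance}, with equality exactly when all $\mu_i$ coincide, and then multiplying Lemmas~\ref{lem:acyclic-orientations} and~\ref{lem:cyclic-orientations}, using $d^2\lambda_d=(d+1)(2^d-2)$, and dividing by $\tau(G)^2=R^2\tau(H)^2$. Your remark that $\mathbf K$ has exactly $d-1$ positive eigenvalues, so that the equality condition matches the statement, is a welcome extra check that the paper leaves implicit.
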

\begin{proof}
Apply the arithmetic--geometric mean inequality to~\eqref{eq:tree-growth}, and then use Lemma~\ref{lem:average-resistance} to obtain
\[
R\le d\left(\frac{\sum_{i=2}^d(1+\frac1{\mu_i})}{d-1}\right)^{d-1}
=d\left(1+\frac{\overline r_H(S)}2\right)^{d-1}.
\]
Equality holds if and only if all $\mu_i$ are equal.
Multiplying the inequalities in Lemmas~\ref{lem:acyclic-orientations} and~\ref{lem:cyclic-orientations}
and then dividing by $\tau(G)^2$ gives the second inequality.
\end{proof}

\subsection{Adding an edge}
The graph obtained from $H$ by adding a new edge with ends $u$ and $v$ is denoted by $H+uv$.
\begin{lemma}\label{lem:edge-growth}
Let $H$ be connected and have no cut edges, and let $u,v\in V(H)$ be nonadjacent. Then
\[
\frac{\tau(H+uv)}{\tau(H)}=1+r_{uv}(H),\qquad
Q(H+uv)\ge Q(H)\frac{2}{(1+r_{uv}(H))^2}.
\]
\end{lemma}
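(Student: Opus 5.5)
The plan has three parts: the spanning-tree ratio, the two orientation counts under edge addition, and then the combination into the bound on \(Q\).

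\textbf{Spanning trees.} Write \(H'=H+uv\). Every spanning tree of \(H'\) either avoids the new edge \(e=uv\) or contains it. The trees avoiding \(e\) are exactly the spanning trees of \(H\). The trees containing \(e\) correspond bijectively to spanning trees of the contraction \(H/uv\). Hence
\[
\tau(H')=\tau(H)+\tau(H/uv).
\]
By Kirchhoff's classical formula, \(r_{uv}(H)=\tau(H/uv)/\tau(H)\): the probability that a uniform spanning tree of \(H\) uses \(uv\) equals the effective resistance between \(u\) and \(v\). One can also see this directly from the Matrix--Tree Theorem, since \(\det\) of \(\mathbf C(H)\) with the rows and columns of \(u,v\) deleted equals \(\tau(H/uv)\). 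Alternatively, apply the rank-one update
\[
\det(\mathbf M+\mathbf b\mathbf b^T)=\det\mathbf M\,(1+\mathbf b^T\mathbf M^{-1}\mathbf b)
\]
to the reduced Laplacian with \(\mathbf b=\mathbf e_u-\mathbf e_v\) restricted. Here the grounded inverse gives exactly \(r_{uv}\), consistent with the pseudoinverse expression quoted earlier. Either way, \(\tau(H+uv)/\tau(H)=1+r_{uv}(H)\).

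\textbf{Orientations.} It suffices to show
\[
\alpha(H+uv)\alpha^*(H+uv)\ge 2\,\alpha(H)\alpha^*(H).
\]
Dividing by \(\tau(H+uv)^2=\tau(H)^2(1+r_{uv})^2\) then gives the claimed inequality for \(Q\). I will prove it by showing \(\alpha(H+uv)\ge\alpha(H)\) and \(\alpha^*(H+uv)\ge 2\alpha^*(H)\), which is the natural split.

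For acyclic orientations, take any acyclic orientation of \(H\). At most one of \(u\to v\) and \(v\to u\) closes a directed cycle, since a directed path exists in at most one direction in an acyclic digraph. So at least one extension is acyclic, and this extension map is injective. Hence \(\alpha(H+uv)\ge\alpha(H)\).

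For totally cyclic orientations, take a totally cyclic orientation of \(H\). Every edge lies in a directed cycle, so the orientation is strongly connected on each component; \(H\) is connected, so it is strongly connected. Therefore both \(u\to v\) and \(v\to u\) lie on directed cycles: each closes up using a directed path back. Both extensions are then totally cyclic and distinct, which gives \(\alpha^*(H+uv)\ge2\alpha^*(H)\). The no-cut-edge hypothesis guarantees \(\alpha^*(H)>0\), so the bound is meaningful, and connectivity gives the strong connectivity used above.

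\textbf{Expected obstacle.} The only delicate point is the identity \(\tau(H/uv)/\tau(H)=r_{uv}(H)\). This is stated in terms of the pseudoinverse, so I will verify it by grounding at \(v\): \(\mathbf e_u^T\mathbf L_v^{-1}\mathbf e_u\) equals both \(r_{uv}\) and the ratio of the relevant cofactors, by Cramer's rule.
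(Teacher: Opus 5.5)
Your proposal is correct and matches the paper's proof: the acyclic and totally cyclic extension arguments are the same, and your rank-one-update alternative is exactly the paper's computation. The paper grounds at a third vertex $z\notin\{u,v\}$ and reads off $b^T\mathbf M^{-1}b=\varphi_u-\varphi_v=r_{uv}(H)$; your primary route, deletion--contraction plus Cramer's rule with grounding at $v$, is the same determinant identity written differently.

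One small wording slip: since $uv\notin E(H)$, no spanning tree of $H$ uses $uv$, so the ``probability'' gloss is misstated (it holds in $H+uv$ with $r_{uv}(H+uv)$). The identity $r_{uv}(H)=\tau(H/uv)/\tau(H)$, which you actually verify via Cramer's rule, is correct.
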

\begin{proof}
Since $u,v$ are nonadjacent and $H$ is connected, we may choose $z\in V(H)\setminus\{u,v\}$.
Let $\mathbf M$ be the principal submatrix obtained from $\mathbf C(H)$ by deleting the row and column corresponding to $z$,
and let $b$ be the vector obtained from $\mathbf e_u-\mathbf e_v$ by deleting the coordinate corresponding to $z$.
Since $H$ is connected, $\mathbf M$ is invertible; when the edge $uv$ is added, the corresponding principal submatrix is $\mathbf M+bb^T$.
By the Matrix--Tree Theorem,
\[
\frac{\tau(H+uv)}{\tau(H)}
=\frac{\det(\mathbf M+bb^T)}{\det\mathbf M}
=1+b^T\mathbf M^{-1}b.
\]
Set the potential at $z$ to zero. When a unit current is injected at $u$ and withdrawn at $v$, the potential vector $\varphi$ on the remaining vertices satisfies
$\mathbf M\varphi=b$. Hence
\[
b^T\mathbf M^{-1}b=b^T\varphi=\varphi_u-\varphi_v=r_{uv}(H),
\]
which gives the first identity.

For any acyclic orientation of $H$, if both directions of the new edge create a directed cycle, then the original orientation contains
a directed path from $u$ to $v$ and one from $v$ to $u$, a contradiction. Thus, every acyclic orientation has at least one acyclic extension, so
$\alpha(H+uv)\ge\alpha(H)$.
On the other hand, in any totally cyclic orientation of $H$, the two ends of every edge are mutually reachable; since $H$ is connected,
any two vertices are mutually reachable. Thus, the new edge belongs to a directed cycle in either direction, and every original edge still belongs to a directed cycle.
Hence, every totally cyclic orientation has two totally cyclic extensions, so $\alpha^{*}(H+uv)\ge2\alpha^{*}(H)$.
Combining this with the first identity and the definition of $Q$ gives the second inequality.
\end{proof}

\section{Proofs of the main theorems}\label{sec:proofs}

\subsection{Proof of Theorem~\ref{thm:pair}}
By retaining only the edges between the factors, we obtain an estimate for the growth in the number of spanning trees that is independent of the internal structure.

\begin{lemma}\label{lem:bipartite}
Let $H=X\vee Y$, where $v(X)=t\ge1$ and $v(Y)=q\ge1$.
Take a new vertex $v$ adjacent to every vertex of $Y$ and to exactly $r$ vertices of $X$,
where $0\le r\le t$ and $d=q+r\ge2$. Denote the resulting graph by $H+v$. Then
\begin{equation}\label{eq:bipartite}
\frac{\tau(H+v)}{\tau(H)}\le
\begin{cases}
q(1+1/t)^{q-1},&r=0,\\
(d+1)(1+1/t)^{q-1}(1+1/q)^{r-1},&r\ge1.
\end{cases}
\end{equation}
\end{lemma}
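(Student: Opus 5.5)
The plan is to reduce to the complete bipartite graph and then compute the spanning-tree ratio exactly from~\eqref{eq:tree-growth}. Since $t,q\ge1$, the complete bipartite graph $H_0=K_{t,q}$ on the parts $V(X)$ and $V(Y)$ is a connected spanning subgraph of $H=X\vee Y$. Let $G_0$ be obtained from $H_0$ by adding $v$ with the same neighbour set $S=V(Y)\cup X'$, where $X'\subseteq V(X)$ is the set of $r$ neighbours of $v$ in $X$. By Lemma~\ref{lem:tree-growth-monotonicity}, $\tau(H+v)/\tau(H)\le\tau(G_0)/\tau(H_0)$. It therefore suffices to show that $\tau(G_0)/\tau(H_0)$ equals the right-hand side of~\eqref{eq:bipartite}, and I expect this to hold with equality.

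Next I compute the Kron-reduced matrix $\mathbf K$ of $H_0$ onto $S$. Here $U=V(X)\setminus X'$ has $t-r$ vertices, each adjacent to all of $Y$ and to nothing else, so $\mathbf C_{UU}=q\mathbf I_{t-r}$. Eliminating $U$ via~\eqref{eq:kron} gives a block matrix indexed by $Y,X'$:
\[
\mathbf K=\begin{pmatrix} t\mathbf I_q-\frac{t-r}{q}\mathbf J_q & -\mathbf J_{q\times r}\\ -\mathbf J_{r\times q} & q\mathbf I_r\end{pmatrix},
\]
where $\mathbf J$ denotes an all-ones matrix. If $U$ is empty ($r=t$), this is just $\mathbf C(H_0)$, and the same formula holds. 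If $r=0$, only the $Y$ block is present.

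I then read off the spectrum by testing natural invariant subspaces.
\begin{itemize}
\item Vectors supported on $Y$ with zero sum are eigenvectors with eigenvalue $t$, giving multiplicity $q-1$.
\item Vectors supported on $X'$ with zero sum are eigenvectors with eigenvalue $q$, giving multiplicity $r-1$ when $r\ge1$.
\item On the span of $\one_Y$ and $\one_{X'}$, the vector $\one$ gives eigenvalue $0$. The vector equal to $r$ on $Y$ and $-q$ on $X'$ gives eigenvalue $r+q=d$; this is a short direct computation.
\end{itemize}
The dimension count is $(q-1)+(r-1)+2=d$, so the list is complete.

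Substituting into~\eqref{eq:tree-growth} yields
\[
\frac{\tau(G_0)}{\tau(H_0)}=d\Bigl(1+\frac1t\Bigr)^{q-1}\Bigl(1+\frac1q\Bigr)^{r-1}\Bigl(1+\frac1d\Bigr)=(d+1)\Bigl(1+\frac1t\Bigr)^{q-1}\Bigl(1+\frac1q\Bigr)^{r-1}
\]
for $r\ge1$. For $r=0$, only the $q-1$ eigenvalues equal to $t$ occur, and we get $q(1+1/t)^{q-1}$.

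The main point needing care is the Kron-reduction computation, namely $\mathbf C_{SU}\mathbf C_{UU}^{-1}\mathbf C_{US}=\frac{t-r}{q}\mathbf J_q$ on the $Y$ block and $0$ elsewhere, together with the boundary cases $r=0$ and $r=t$. The rest is bookkeeping.
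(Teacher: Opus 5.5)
Your proposal is correct and follows the paper's proof: you reduce to $K_{t,q}$ via Lemma~\ref{lem:tree-growth-monotonicity}, Kron-reduce onto $S$, read off the positive eigenvalues $t$, $q$, $d$ with multiplicities $q-1$, $r-1$, $1$, and substitute into~\eqref{eq:tree-growth}. You also check the eigenvectors explicitly and handle the boundary cases $r=0$ and $r=t$, which the paper only asserts.
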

\begin{proof}
By Lemma~\ref{lem:tree-growth-monotonicity}, the ratio $\tau(H+v)/\tau(H)$ is at most the corresponding ratio for $K_{t,q}$ with the same neighbour set of $v$.
Let $\mathbf J_{a,b}$ be the all-ones $a\times b$ matrix, and set $\mathbf J_a=\mathbf J_{a,a}$.
When $r=0$, the Kron-reduced matrix onto $Y$ is
$t(\mathbf I_q-\mathbf J_q/q)$, whose positive eigenvalue is $t$ with multiplicity $q-1$.

When $r\ge1$, the Kron-reduced matrix onto $Y$ together with the selected $r$ vertices of $X$ is
\[
\begin{pmatrix}
t\mathbf I_q-\dfrac{t-r}{q}\mathbf J_q&-\mathbf J_{q,r}\\
-\mathbf J_{r,q}&q\mathbf I_r
\end{pmatrix}.
\]
Its positive eigenvalues are $t,q$ and $d$, with multiplicities $q-1,r-1$ and $1$, respectively. Substituting these values into~\eqref{eq:tree-growth} gives the result.
\end{proof}

For integers $q\ge3$ and $t\ge1$, write
$g_q(t):=\lambda_q(t/(t+1))^{2q-2}$.

\begin{lemma}\label{lem:step}
Under the assumptions of Lemma~\ref{lem:bipartite}, if $q\ge3$, then
$Q(H+v)\ge Q(H)g_q(t)$.
\end{lemma}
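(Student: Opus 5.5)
The plan is to combine the general comparison of Proposition~\ref{prop:resistance} with the explicit tree-growth bounds of Lemma~\ref{lem:bipartite}, and then verify an elementary numerical inequality. First, $H=X\vee Y$ is connected and $v$ has $d=q+r\ge 2$ neighbours. So Lemmas~\ref{lem:acyclic-orientations} and~\ref{lem:cyclic-orientations} give $\alpha(H+v)\ge(d+1)\alpha(H)$ and $\alpha^*(H+v)\ge(2^d-2)\alpha^*(H)$. Hence
\[
Q(H+v)\ge Q(H)\,\frac{(d+1)(2^d-2)}{R^2},\qquad R=\frac{\tau(H+v)}{\tau(H)},
\]
and we substitute the upper bounds for $R$ from~\eqref{eq:bipartite}. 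We split into the two cases of that lemma.

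\emph{Case $r=0$.} Here $d=q$ and $R\le q(1+1/t)^{q-1}$. Substituting gives exactly
\[
\frac{(q+1)(2^q-2)}{q^2}\left(\frac{t}{t+1}\right)^{2q-2}=g_q(t),
\]
so this case is immediate.

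\emph{Case $r\ge1$.} Here $d=q+r$ and $R\le(d+1)(1+1/t)^{q-1}(1+1/q)^{r-1}$. After cancelling the common factor $(t/(t+1))^{2q-2}$, it suffices to show
\[
f_q(r):=\frac{2^{q+r}-2}{(q+r+1)(1+1/q)^{2r-2}}\ \ge\ \lambda_q=\frac{(q+1)(2^q-2)}{q^2}\qquad(r\ge1,\ q\ge3).
\]
For $r=1$ we use $2^{q+1}-2\ge2(2^q-2)$. This reduces the claim to $2q^2\ge(q+1)(q+2)$, which holds for $q\ge4$. For $q=3$ we check directly that $14/5>8/3$.

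For $q\ge4$ we then argue by induction on $r$, via the ratio
\[
\frac{f_q(r+1)}{f_q(r)}=\frac{2^{d+1}-2}{2^d-2}\cdot\frac{d+1}{d+2}\cdot\frac{q^2}{(q+1)^2}\ \ge\ \frac{2(q+2)q^2}{(q+3)(q+1)^2}.
\]
This uses $d\ge q+1$. The right-hand side equals $192/175>1$ at $q=4$ and increases in $q$, so $f_q$ is nondecreasing in $r$ and $f_q(r)\ge f_q(1)\ge\lambda_q$.

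The main obstacle is $q=3$, where the target is $\lambda_3=8/3$. There the step ratio can dip slightly below $1$ (for instance from $r=2$ to $r=3$), so monotonicity fails and the inequality is tight-ish: the values are about $2.8$ for small $r$. I would handle it directly. One computes
\[
f_3(r)=\frac{2^{r+3}-2}{(r+4)(16/9)^{r-1}},
\]
and checks $f_3(1),\dots,f_3(r_0)>8/3$ numerically for a modest $r_0$. For $r>r_0$, the crude estimate
\[
f_3(r)\ge\frac{15\cdot 2^{r+3}}{16\,(r+4)(16/9)^{r-1}}=\frac{15}{r+4}\left(\frac98\right)^{r-1}
\]
(using $2^{r+3}-2\ge\frac{15}{16}2^{r+3}$) eventually exceeds $8/3$, since $(9/8)^{r-1}$ outgrows $r+4$; one can confirm this for $r\ge r_0$ by an easy induction. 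Combining the cases gives $Q(H+v)\ge Q(H)g_q(t)$.
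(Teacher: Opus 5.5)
Your proposal is correct and follows the paper's proof essentially step for step. The case $r=0$ is immediate from Proposition~\ref{prop:resistance} and~\eqref{eq:bipartite}. For $r\ge1$, your inequality $f_q(r)\ge\lambda_q$ is exactly the paper's $F(q,r)=f_q(r)/\lambda_q\ge1$, and for $q\ge4$ you use the same base case at $r=1$ and the same ratio bound $192/175$.

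For $q=3$, the paper checks $1\le r\le4$ directly and bounds $F(3,r+1)/F(3,r)$ below by $9(r+4)/(8(r+5))\ge1$ for $r\ge4$. That is precisely the consecutive ratio of your crude bound $\frac{15}{r+4}(9/8)^{r-1}$. So your unspecified $r_0$ can be taken to be $4$: the crude bound is about $2.6697>8/3$ at $r=5$, where your induction starts.
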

\begin{proof}
If $r=0$, the result follows by combining~\eqref{eq:bipartite} with Proposition~\ref{prop:resistance}.
If $r\ge1$, combining these two inequalities gives $Q(H+v)\ge Q(H)g_q(t)F(q,r)$, where
\[
F(q,r):=\frac{q^2(2^{q+r}-2)}{(q+r+1)(q+1)(2^q-2)}
\left(\frac q{q+1}\right)^{2r-2}.
\]
Thus, it suffices to prove that $F(q,r)\ge1$. When $q\ge4$,
\[
F(q,1)>\frac{2q^2}{(q+1)(q+2)}>1,\qquad
\frac{F(q,r+1)}{F(q,r)}
>2\frac{q+2}{q+3}\left(\frac q{q+1}\right)^2
\ge\frac{192}{175}>1.
\]
When $q=3$, direct calculation gives $F(3,r)>1$ for $1\le r\le4$.
For $r\ge4$, we have
\[
\frac{F(3,r+1)}{F(3,r)}
>\frac{9(r+4)}{8(r+5)}\ge1.
\]
Thus, $F(q,r)>1$, and the result follows.
\end{proof}

\begin{lemma}\label{lem:step-uniform}
For integers $t\ge4$ and $q\ge3$, we have $g_q(t)\ge g_3(t)>1$.
\end{lemma}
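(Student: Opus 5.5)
We need $g_q(t)=\lambda_q(t/(t+1))^{2q-2}$ with $\lambda_q=(q+1)(2^q-2)/q^2$. The plan has two parts: show that $g_3(t)>1$ for $t\ge4$, and show that $g_q(t)$ is nondecreasing in $q$ for fixed $t\ge4$.

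\textbf{The bound $g_3(t)>1$.} For $q=3$ we have $\lambda_3=4\cdot6/9=8/3$, so
\[
g_3(t)=\frac83\left(\frac t{t+1}\right)^4 .
\]
This is increasing in $t$, so it suffices to check $t=4$. There $g_3(4)=\frac83\cdot\frac{256}{625}=\frac{2048}{1875}>1$.

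\textbf{Monotonicity in $q$.} I will show $g_{q+1}(t)/g_q(t)\ge1$ for all $q\ge3$ and $t\ge4$, and then chain these ratios from $3$ up to $q$. The ratio is
\[
\frac{g_{q+1}(t)}{g_q(t)}=\frac{\lambda_{q+1}}{\lambda_q}\left(\frac t{t+1}\right)^2 .
\]
Since $t/(t+1)\ge 4/5$, it suffices to prove $\lambda_{q+1}/\lambda_q\ge 25/16$. Writing it out,
\[
\frac{\lambda_{q+1}}{\lambda_q}=\frac{(q+2)\,q^2\,(2^{q+1}-2)}{(q+1)^3\,(2^q-2)} .
\]
The factor $(2^{q+1}-2)/(2^q-2)=2+2/(2^q-2)$ exceeds $2$. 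The factor $(q+2)q^2/(q+1)^3$ increases toward $1$; I will confirm this by checking that its logarithmic derivative is positive, or by a direct comparison. So for $q\ge3$ the ratio is at least
\[
2\cdot\frac{5\cdot 9}{64}=\frac{90}{64}=1.406\ldots .
\]
This is smaller than $25/16=1.5625$, so the crude bound is not enough and the small cases need care.

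\textbf{The main obstacle: small $q$.} I will handle small $q$ exactly, using the true value of the factor $2+2/(2^q-2)$.
\begin{itemize}
\item For $q=3$: $\lambda_4/\lambda_3=\frac{5\cdot14/16}{8/3}=\frac{70}{16}\cdot\frac38=\frac{210}{128}\approx1.64>1.5625$.
\item For $q=4$: $\lambda_5/\lambda_4=\frac{6\cdot30/25}{70/16}=\frac{7.2}{4.375}\approx1.646$.
\item For $q\ge5$: the crude bound becomes $2(q+2)q^2/(q+1)^3\ge 2\cdot175/216\approx1.62$, which already exceeds $25/16$.
\end{itemize}
These cases cover all $q\ge3$, so $\lambda_{q+1}/\lambda_q\ge 25/16$ throughout. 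Hence $g_q(t)\ge g_3(t)>1$ for all $t\ge4$ and $q\ge3$.
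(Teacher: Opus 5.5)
Your proof is correct and is essentially the paper's argument: you reduce $g_{q+1}(t)/g_q(t)=(\lambda_{q+1}/\lambda_q)(t/(t+1))^2$ to the worst case $t=4$, check $q=3,4$ directly, handle $q\ge5$ via $2^{q+1}-2>2(2^q-2)$ and the monotonicity of $q^2(q+2)/(q+1)^3$, and finish with $g_3(4)=2048/1875$. Your bound $350/216>25/16$ is exactly the paper's $28/27$ after multiplying by $16/25$, and the monotonicity you left to confirm does hold, since the logarithmic derivative equals $(q+4)/\bigl(q(q+1)(q+2)\bigr)>0$.
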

\begin{proof}
The ratio of consecutive terms, $g_{q+1}(t)/g_q(t)$, increases with $t$, so it suffices to check that it is greater than one when $t=4$.
The cases $q=3,4$ can be checked directly; when $q\ge5$, the inequality
$2^{q+1}-2>2(2^q-2)$ and the fact that $q^2(q+2)/(q+1)^3$ is increasing
imply that the ratio is greater than $28/27$. Thus, $g_q(t)$ increases with $q\ge3$.
Moreover, $g_3(t)\ge g_3(4)=2048/1875>1$, proving the result.
\end{proof}

We now give an initial bound for graphs on six vertices.
For $0\le i\le3$, let $J_i$ be the graph of order three with $i$ edges.
For each $i$, such a graph is unique up to isomorphism.

\begin{lemma}\label{lem:seeds}
For $0\le i,j\le3$, we have $Q(J_i\vee J_j)\ge\frac{34}{15}$.
\end{lemma}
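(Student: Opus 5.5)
The statement is a finite check. Since $J_i\vee J_j\cong J_j\vee J_i$, it is enough to treat the ten pairs $0\le i\le j\le 3$. The graphs run from $J_0\vee J_0=K_{3,3}$ to $J_3\vee J_3=K_6$, and each is $2$-connected, so it has no cut edges. My plan is to compute the three invariants $\alpha$, $\alpha^*$ and $\tau$ exactly for each of the ten graphs, form $Q$ as an exact rational number, and compare it with $34/15$. The comparison tools of Section~\ref{sec:comparison} are too lossy here, since the factor $2/(1+r)^2$ in Lemma~\ref{lem:edge-growth} can be smaller than $1$. They only serve as a consistency check, not as a substitute for computing each case.

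The three invariants would be obtained as follows.
\begin{itemize}
\item For $\tau$, use the Matrix--Tree Theorem. Since $J_i\vee J_j$ has the Laplacian spectrum of a join, the eigenvalues are explicit: those of $\mathbf C(J_i)$ and $\mathbf C(J_j)$ on $\one^\perp$ shifted by $3$, together with $0$ and $6$. For instance $\tau(K_{3,3})=81$ and $\tau(K_6)=6^4=1296$.
\item For $\alpha$, use Stanley's identity $\alpha(G)=|P(G;-1)|$, where $P$ is the chromatic polynomial. Each $P(J_i\vee J_j;k)$ follows easily by counting colourings of the two sides with disjoint colour sets, since the two sides must use disjoint colours. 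For instance $\alpha(K_{3,3})=230$ and $\alpha(K_6)=720$.
\item For $\alpha^*$, note that in a connected graph a totally cyclic orientation is the same as a strongly connected orientation. I would count these via $\alpha^*(G)=|F(G;-1)|$, where $F$ is the flow polynomial, computed by deletion--contraction. Alternatively one can count strongly connected orientations directly, using inclusion--exclusion over sink sets.
\end{itemize}

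With the three numbers in hand, I tabulate $Q$ for all ten cases and verify $Q\ge 34/15$. I expect the minimum to be attained, with equality, at one specific pair, most likely an extreme case such as $K_{3,3}$ or $K_6$. I will identify that case, confirm the exact value $34/15$ there, and check that the remaining nine values are larger.

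The main obstacle is computing $\alpha^*$ reliably for the ten graphs, which have between $9$ and $15$ edges. Deletion--contraction on the Tutte polynomial grows quickly, so I would first try the known Tutte polynomials of $K_{3,3}$ and $K_6$. I would then obtain the intermediate graphs by repeated use of $T(G)=T(G\setminus e)+T(G/e)$, applied to one internal edge at a time and exploiting the symmetry within the factors. Each value of $T(\cdot;0,2)$ can be cross-checked against a direct count of strongly connected orientations.
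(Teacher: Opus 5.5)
\textbf{Verdict: the route is sound in principle but incomplete as written.} Computing $\alpha$, $\alpha^*$ and $\tau$ exactly for all ten graphs would prove the lemma, and your descriptions of $\tau$ (join spectrum) and $\alpha$ (chromatic polynomial at $-1$) are correct. But the proposal is only a plan. No value of $Q$ is actually established. The one substantial ingredient, $\alpha^*$ for the eight graphs strictly between $K_{3,3}$ and $K_6$, is left to deletion--contraction computations you have not done.

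\textbf{The missing idea.} You do not need $\alpha^*$ exactly, only a lower bound, and the orientation half of Lemma~\ref{lem:edge-growth} supplies one for free. Every strong orientation of $K_{3,3}$ stays strong however the $i+j$ internal edges are oriented, so $\alpha^*(J_i\vee J_j)\ge 102\cdot 2^{i+j}$. Combine this with the exact $\tau=p_ip_j$ (where $p_i=9,15,24,36$) and the exact closed form for $\alpha$. This gives $Q\ge 102\cdot 2^{i+j}\alpha/(p_ip_j)^2$. The paper then finishes with $\alpha\ge 20(i+3)(j+3)$ and $p_i^2\le 30(i+3)2^i$. The only orientation count it needs is $\alpha^*(K_{3,3})=102$.

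You are right that iterating Lemma~\ref{lem:edge-growth} as a whole fails: it yields only $230\cdot 102\cdot 2^{i+j}/(p_ip_j)^2$, about $0.89$ for $K_6$. With exact resistances, however, its $\tau$-part is an identity. So the entire loss comes from $\alpha(H+uv)\ge\alpha(H)$, and your own formula for $\alpha$ removes it.

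\textbf{There is no equality case.} You expect $34/15$ to be attained, but $Q(K_{3,3})=230\cdot 102/81^2\approx 3.58$ and $Q(K_6)=720\cdot 22320/1296^2\approx 9.57$. Since $p_i^2<30(i+3)2^i$ strictly for every $i$, all ten values exceed $34/15$ strictly. The weakest of the bounds $102\cdot 2^{i+j}\alpha/(p_ip_j)^2$, at $i=j=2$, is already about $2.48$. The constant $34/15=20\cdot 102/30^2$ comes from the simplifications in that chain of estimates; it is not a sharp value. Your step ``confirm the exact value $34/15$ there'' would therefore fail. This does not affect the inequality itself, but the step should be dropped.
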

\begin{proof}
Denote the Laplacian eigenvalues of $J_i$ by
$0=\lambda_1(J_i)\le\lambda_2(J_i)\le\lambda_3(J_i)$.
Since $\overline{J_i\vee J_j}=\overline{J_i}\sqcup\overline{J_j}$,
the formulas for the Laplacian spectra of complements and disjoint unions~\cite[Section~1.3.2 and Proposition~1.3.6]{BrouwerHaemers2012}
show that the eigenvalues of $J_i\vee J_j$ are $0,6$ and $3+\lambda_k(J_i),3+\lambda_k(J_j)$ for $k=2,3$. Write
$
p_i:=(3+\lambda_2(J_i))(3+\lambda_3(J_i))
$.
\[
\begin{array}{c|cccc}
i&0&1&2&3\\ \hline
(\lambda_2(J_i),\lambda_3(J_i))&(0,0)&(0,2)&(1,3)&(3,3)\\
p_i&9&15&24&36
\end{array}
\]
Applying the spectral form of the Matrix--Tree Theorem~\cite[Proposition~1.3.4]{BrouwerHaemers2012} then gives
$
\tau(J_i\vee J_j)
=p_ip_j.
$

Let $P(F,z)$ denote the chromatic polynomial of $F$.
Let $s(F,a)$ denote the number of partitions of $V(F)$ into $a$ nonempty stable sets,
and let $y_i$ be one when $i=0$ and zero otherwise.
Then $s(J_i,1)=y_i$, $s(J_i,2)=3-i$, and $s(J_i,3)=1$.
Every stable set in a join is contained in one of the factors, so
\[
P(J_i\vee J_j,z)=\sum_{a,b=1}^3s(J_i,a)s(J_j,b)(z)_{a+b}
\]
where $(z)_h=z(z-1)\cdots(z-h+1).$ Since $J_i\vee J_j$ has six vertices, Stanley's identity~\cite{Stanley1973} says that its number of acyclic orientations is $P(J_i\vee J_j,-1)$. Substituting the preceding expression for the chromatic polynomial gives
\[
\begin{aligned}
\alpha(J_i\vee J_j)
&=P(J_i\vee J_j,-1)\\
&=24[(i+2)(j+2)+5]+6y_i(j+1)+6y_j(i+1)+2y_iy_j\\
&\ge24[(i+2)(j+2)+5]\\
&\ge20(i+3)(j+3).
\end{aligned}
\]
Reference~\cite[Section~3.2]{CondeMerino2009} gives $\alpha^{*}(K_{3,3})=102$.
The graph $J_i\vee J_j$ is obtained from $K_{3,3}$ by adding $i+j$ internal edges.
Since $K_{3,3}$ is connected, a totally cyclic orientation is equivalent to a strong orientation~\cite[Section~19.3, p.~512]{BondyMurty2008}.
Take any totally cyclic orientation of $K_{3,3}$. If a new edge is oriented as $u\to v$,
strong connectivity ensures that the original orientation contains a directed path from $v$ to $u$, so the new edge also lies on a directed cycle;
the same applies to the reverse direction. The original edges remain on directed cycles, so the directions of the $i+j$ internal edges can be chosen independently,
giving $2^{i+j}$ totally cyclic extensions. Extensions of distinct original orientations remain distinct when restricted to $K_{3,3}$, so
$\alpha^{*}(J_i\vee J_j)\ge102\cdot2^{i+j}$.
On the other hand, direct calculation gives $p_i^2\le30(i+3)2^i$. Hence
\[
Q(J_i\vee J_j)=\frac{\alpha(J_i\vee J_j)\alpha^{*}(J_i\vee J_j)}{p_i^2p_j^2}\ge\frac{20(i+3)(j+3)\cdot102\,2^{i+j}}{p_i^2p_j^2}\ge\frac{20\cdot102}{30^2}=\frac{34}{15}.\qedhere
\]
\end{proof}

\begin{proof}[Proof of Theorem~\ref{thm:pair}]
Choose three vertices from each factor and retain all induced edges.
By Lemma~\ref{lem:seeds}, we have $Q\ge34/15$.
For $j=3,\ldots,a-1$, add vertices alternately to the two sides, increasing their orders from $(j,j)$ to $(j+1,j+1)$.
The product of the factors given by Lemma~\ref{lem:step} is
\[
g_j(j)g_{j+1}(j)
=\lambda_j\lambda_{j+1}
\left(\frac j{j+1}\right)^{4j-2}.
\]
Thus, when both sides have order $a$, we have $Q\ge D_a$.
Next, fix the smaller side and increase the order of the other side from $a$ to $b$.
The product of the additional factors telescopes to give
\[
\prod_{t=a}^{b-1}g_a(t)
=\lambda_a^{b-a}\left(\frac ab\right)^{2a-2}.
\]
At each step, the neighbours of the new vertex are exactly its neighbours already present in the target induced subgraph.
This proves~\eqref{eq:pair-bound}. Using Lemma~\ref{lem:step-uniform}, a straightforward calculation shows that the right-hand side of~\eqref{eq:pair-bound}
attains its minimum at $a=b=4$, with value $D_4=780759/524288$.
\end{proof}

\subsection{Proof of Theorem~\ref{thm:internal}}

\begin{proof}[Proof of Theorem~\ref{thm:internal}]
Start with $K_{a,b}$ on the given vertex sets and add the internal edges of $A$ and $B$ one at a time.
Every intermediate graph is connected and has no cut edges. Before an edge $uv$ is added within $A$,
every vertex of $B$ provides a path of length two from $u$ to $v$.
Let $H_0$ be the network obtained by retaining only these $b$ paths. Each path has resistance two,
so their effective resistance in parallel is $r_{uv}(H_0)=2/b$.
In passing from $H$ to $H_0$, the conductances of all other edges decrease from one to zero.
Effective resistance does not decrease when edge conductances decrease~\cite[Section~2.6, p.~44]{GhoshBoydSaberi2008}, so
$r_{uv}(H)\le r_{uv}(H_0)=2/b$.
By Lemma~\ref{lem:edge-growth}, we obtain
\[
Q(H+uv)\ge Q(H)\frac{2}{(1+2/b)^2}=Q(H)\eta_b.
\]
Similarly, adding an edge within $B$ gives a factor of $\eta_a$.
Multiplying these factors proves~\eqref{eq:internal-bound}.

When $a,b\ge5$, we have $\eta_a,\eta_b\ge\eta_5=50/49>1$,
which gives the uniform lower bound; whenever there is an internal edge, the inequality $Q(A\vee B)\ge Q(K_{a,b})$ is strict,
whereas equality holds when both factors are empty graphs.
\end{proof}

\subsection{Proof of Theorem~\ref{thm:universal}}

The following estimate controls the growth in the number of spanning trees at each step.

\begin{lemma}\label{lem:grounded}
Let $F$ have at least two vertices, and let $v$ be a vertex of minimum degree in $F$.
Let $J=F-v$, $H=K_2\vee J$, and $G=K_2\vee F$.
Write $n=v(H)$ and $d=d_F(v)+2$. Then
\begin{equation}\label{eq:grounded-growth}
\frac{\tau(G)}{\tau(H)}
\le\frac{n+1}{n}\,d
\left(1+\frac{3(d-1)}{2d^2}\right)^{d-2}.
\end{equation}
\end{lemma}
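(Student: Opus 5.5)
The approach is to apply \eqref{eq:tree-growth} to $G=H+v$, where the neighbour set of $v$ is $S=\{x,y\}\cup N_F(v)$, with $x,y$ the two vertices of $K_2$ and $|S|=d$. The aim is to show that one positive eigenvalue of the Kron-reduced matrix $\mathbf K$ is exactly $n$, and to control the other $d-2$ eigenvalues through effective resistances. Pulling out that eigenvalue gives the factor $1+1/n=(n+1)/n$. The arithmetic--geometric mean inequality on the remaining $d-2$ factors, as in Proposition~\ref{prop:resistance}, should then give $\bigl(1+\frac{1}{d-2}\sum_{i}1/\mu_i\bigr)^{d-2}$. It therefore suffices to show
\[
\sum_{\mu_i\ne n}\frac1{\mu_i}\le\frac{3(d-1)(d-2)}{2d^2}.
\]
When $d=2$ (so $v$ is isolated in $F$), $S=\{x,y\}$ and $\mathbf K$ has the single positive eigenvalue $n$. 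In that case \eqref{eq:tree-growth} gives exactly $2(1+1/n)$, so I assume $d\ge3$ below.

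First, I verify the eigenvalue $n$. The vertices $x$ and $y$ are adjacent to each other and to every vertex of $J$, and the transposition $x\leftrightarrow y$ is an automorphism of $H$. Hence $\mathbf C(H)(\mathbf e_x-\mathbf e_y)=n(\mathbf e_x-\mathbf e_y)$. This vector vanishes on $U$, so the block formula \eqref{eq:kron} gives $\mathbf K(\mathbf e_x-\mathbf e_y)=n(\mathbf e_x-\mathbf e_y)$ as well. The same antisymmetry also yields $r_{xy}(H)=2/n$.

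Next, I bound the remaining reciprocals through the trace identity behind Lemma~\ref{lem:average-resistance}, namely $\sum_{i\ge2}1/\mu_i=\frac1d\sum_{\{p,q\}\subseteq S}r_{pq}(H)$. Subtracting $1/n$ leaves a sum over the pairs $\{x,w\}$, $\{y,w\}$ and $\{w,w'\}$ with $w,w'\in N_F(v)$, together with the term $\frac{2}{dn}-\frac1n\le0$, which can be dropped.

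Here the minimum-degree hypothesis enters. Each $w\in N_F(v)$ has $d_F(w)\ge d-2$, so $\deg_H(w)\ge d-1$. For $r_{xw}$, I combine in parallel the direct edge $wx$ with the edge-disjoint paths $w$--$u$--$x$ over the other $H$-neighbours $u$ of $w$. Monotonicity of resistance under deleting edges (Rayleigh) then gives $r_{xw}\le 2/(\deg_H w+1)\le 2/d$. For $r_{ww'}$, I use the two-step paths through $x$, $y$ and the common neighbours, or else route through the symmetric pair $\{x,y\}$.

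The main obstacle is sharpening these resistance bounds enough to reach the constant $\tfrac32$. The crude estimates $r_{xw}\le2/d$ and $r_{ww'}\le4/d$ only give $2(d-1)(d-2)/d^2$. What I would try first is to apply Rayleigh to the subgraph of $H$ consisting of $x$, $y$, and the stars of $w$ and $w'$. I would then compute $r_{xw}$ and $r_{ww'}$ there exactly by eliminating the neighbours of $w$, exploiting the $x\leftrightarrow y$ symmetry; this should roughly halve the cross terms. If that is not enough, I would instead use Lemma~\ref{lem:tree-growth-monotonicity} to pass to an explicit spanning subgraph of $H$ that keeps, for each $w$, its edges to $x$, $y$ and $d-3$ further vertices. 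I would then compute the spectrum of its Kron reduction onto $S$ directly, in the style of Lemma~\ref{lem:bipartite}, and apply AM--GM to the $d-2$ eigenvalues other than $n$.
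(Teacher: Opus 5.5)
Your setup is correct. The vector $\mathbf e_x-\mathbf e_y$ is an eigenvector of $\mathbf K$ with eigenvalue $n$, which gives the factor $(n+1)/n$. AM--GM then correctly reduces the lemma to $\sum_{\mu_i\ne n}1/\mu_i\le 3(d-1)(d-2)/(2d^2)$. This sum is in fact exactly the paper's $\operatorname{tr}(\mathbf A\mathbf B)$, so your Kron-reduction route is equivalent to the paper's grounded determinant.

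\emph{First gap: the discarded term is not slack.} You drop $\frac{2}{dn}-\frac1n=-\frac{d-2}{dn}$. By the $x\leftrightarrow y$ antisymmetry, $r_{xw}(H)=r_{yw}(H)=\rho_w+\frac1{2n}$ exactly. Here $\rho_w=(\mathbf M^{-1})_{ww}$, with $\mathbf M=2\mathbf I+\mathbf C(J)$, is the resistance from $w$ to the shorted pair $\{x,y\}$. The $2(d-2)$ cross pairs therefore carry an excess of exactly $\frac{d-2}{dn}$, and the discarded term is what cancels it. Without it, the inequality you set out to prove is false. For $F=K_2$ you have $H=K_3$, $d=n=3$ and $r_{xw}=r_{yw}=\frac23$, so $\frac1d(r_{xw}+r_{yw})=\frac49>\frac13=\frac{3(d-1)(d-2)}{2d^2}$, even though the lemma holds with equality there. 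For $F=K_3$ you would need $\frac58\le\frac9{16}$. So no sharpening of individual resistance bounds can finish the argument as you have structured it.

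\emph{Second gap: the quantitative core is not supplied.} You name the constant $\frac32$ as the main obstacle and list things to try, but do not carry any of them out. Three ingredients are missing.
For $w,w'\in N_F(v)$, a current from $w$ to $w'$ leaves $x$ and $y$ at equal potential. Hence $r_{ww'}(H)=\rho_w+\rho_{w'}-2(\mathbf M^{-1})_{ww'}\le\rho_w+\rho_{w'}$, because $\mathbf M^{-1}$ is entrywise nonnegative.
Keeping the $-\frac{d-2}{dn}$ term, the $\frac1{2n}$ contributions cancel. With $\mathbf A$ the principal submatrix of $\mathbf M^{-1}$ on $N_F(v)$, this gives $\sum_{\mu_i\ne n}1/\mu_i=\operatorname{tr}\mathbf A-\frac1d\one^T\mathbf A\one\le\frac{d-1}{d}\sum_{w}\rho_w$.
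The quantity to bound is $\rho_w$, not $r_{xw}$. Delete the edges of $J$ not incident with $w$ and apply Rayleigh. Each $J$-neighbour $u$ of $w$ then gives a branch of resistance $1+\frac12$ to the grounded pair, since $u$ has two parallel unit edges to it. These branches are in parallel with the two direct edges, so $\rho_w\le 1/(2+\frac23d_J(w))\le\frac3{2d}$ using $d_J(w)\ge d-3$. Your paths $w$--$u$--$x$ of resistance $2$ are too weak for this.
Summing over the $d-2$ neighbours gives exactly $\frac{3(d-1)(d-2)}{2d^2}$. This is the paper's argument written in resistance language.
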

\begin{proof}
Let $\mathbf{M}=2\mathbf I_{v(J)}+\mathbf C(J)$ and $\delta=d-2$.
Delete from $\mathbf C(H)$ the row and column corresponding to one of the vertices of $K_2$.
By the Matrix--Tree Theorem and the identity
$\mathbf{M}\one_{v(J)}=2\one_{v(J)}$, we obtain
\[
\tau(H)=\det\begin{pmatrix}n-1&-\one_{v(J)}^T\\-\one_{v(J)}&\mathbf{M}\end{pmatrix}
=(n-1-\one_{v(J)}^T\mathbf{M}^{-1}\one_{v(J)})\det \mathbf{M}
=\frac n2\det \mathbf{M}.
\]
The same calculation for $G=K_2\vee F$ gives
$
\tau(G)=\frac{n+1}{2}\det(2\mathbf I_{v(F)}+\mathbf C(F)).
$
If $\delta=0$, then $d=2$ and $v$ is an isolated vertex in $F$, so
$\det(2\mathbf I_{v(F)}+\mathbf C(F))=2\det\mathbf M$.
Thus, $\tau(G)/\tau(H)=2(n+1)/n$, and equality holds in~\eqref{eq:grounded-growth}.
Assume henceforth that $\delta>0$.

Let $\mathbf{W}$ be the $v(J)\times\delta$ matrix whose columns are the standard unit vectors with coordinate set $V(J)$, namely $\mathbf e_u$ for $u\in N_F(v)$.
Write $\mathbf{A}=\mathbf{W}^T\mathbf{M}^{-1}\mathbf{W}$ and
$\mathbf{B}=\mathbf I_\delta-\one_\delta\one_\delta^T/d$.
Ordering the vertices according to $v,V(J)$ gives
\[
2\mathbf I_{v(F)}+\mathbf C(F)=
\begin{pmatrix}
d&-\one_\delta^T\mathbf{W}^T\\
-\mathbf{W}\one_\delta&\mathbf{M}+\mathbf{W}\mathbf{W}^T
\end{pmatrix}.
\]
Apply elementary row operations to eliminate the entries below $d$ in the first column, and then expand along that column to obtain
\[
\det(2\mathbf I_{v(F)}+\mathbf C(F))=d\det\left(\mathbf{M}+\mathbf{W}\mathbf{W}^T
-\frac1d\mathbf{W}\one_\delta\one_\delta^T\mathbf{W}^T\right)=d\det(\mathbf{M}+\mathbf{W}\mathbf{B}\mathbf{W}^T).
\]
For the matrix $\mathbf{B}$, the eigenvalue on the space spanned by $\one_\delta$ is $2/d$,
and the eigenvalue on its orthogonal complement is $1$. Thus, it is positive definite and has a symmetric square root $\mathbf{B}^{1/2}$.
Using the identity $\det(\mathbf I+XY)=\det(\mathbf I+YX)$ and the definition of $\mathbf A$, we further obtain
\[
\frac{\det(\mathbf{M}+\mathbf{W}\mathbf{B}\mathbf{W}^T)}{\det\mathbf{M}}
=\det(\mathbf I_{v(J)}+\mathbf{M}^{-1}\mathbf{W}\mathbf{B}\mathbf{W}^T)
=\det(\mathbf I_\delta+\mathbf{B}^{1/2}\mathbf{A}\mathbf{B}^{1/2}).
\]
Substituting the preceding expressions into $\tau(G)/\tau(H)$ gives
\begin{equation}\label{eq:grounded-determinant}
\frac{\tau(G)}{\tau(H)}
=\frac{n+1}{n}\,d\,
\det(\mathbf I_\delta+\mathbf{B}^{1/2}\mathbf{A}\mathbf{B}^{1/2}).
\end{equation}

For any $j\in V(J)$, let $x=\mathbf{M}^{-1}\mathbf e_j$.
If a minimum coordinate $x_i$ is negative, then
$(\mathbf{M}x)_i=2x_i+\sum_{w\in N_J(i)}(x_i-x_w)<0$, 
contradicting $(\mathbf{M}x)_i=(\mathbf e_j)_i\ge0$. Thus, all entries of $\mathbf{M}^{-1}$ are nonnegative.

In $H=K_2\vee J$, set the potentials at both vertices of $K_2$ to zero. Since every vertex of $J$ is adjacent to both of these vertices,
the matrix relating currents to potentials in $H$ on the coordinates of $J$ is $\mathbf M=2\mathbf I_{v(J)}+\mathbf C(J)$:
if the potential vector on $J$ is $\varphi$, then the vector of injected currents is $\mathbf M\varphi$.
If a unit current is injected only at $u\in V(J)$, then $\mathbf M\varphi=\mathbf e_u$, and hence
the potential at $u$ is $\varphi_u=\mathbf e_u^T\mathbf M^{-1}\mathbf e_u=(\mathbf M^{-1})_{uu}$.
After deleting all edges of $J$ not incident with $u$, there are two edges of unit resistance between $u$ and the two vertices at zero potential, with total conductance $2$.
For each $w\in N_J(u)$, after the deletions, $w$ is adjacent only to $u$ within $J$;
the two edges of unit resistance from $w$ to the vertices at zero potential are in parallel and have equivalent resistance $1/2$.
Thus, the branch from $u$ through $w$ consists of an edge of resistance $1$, namely $uw$, in series with this equivalent resistance,
and has total resistance $3/2$ and conductance $2/3$.
These branches are in parallel with the two direct edges, so, under unit current injection, the potential at $u$ after the deletions is $1/(2+2d_J(u)/3)$.
For $u\in N_F(v)$, the choice of a vertex of minimum degree gives $d_J(u)\ge\delta-1$.
After identifying the two vertices at zero potential as a single current sink, the potential at $u$ under unit current injection is the effective resistance from $u$ to that sink.
By Rayleigh monotonicity~\cite[Section~2.6, p.~44]{GhoshBoydSaberi2008}, deleting edges of $J$ does not decrease this resistance, so
\[
(\mathbf{M}^{-1})_{uu}\le\frac1{2+2d_J(u)/3}\le\frac3{2d}.
\]
Since all entries of $\mathbf{A}$ are nonnegative,
\[
\operatorname{tr}(\mathbf{A}\mathbf{B})
=\operatorname{tr}\mathbf{A}-\frac1d\one_\delta^T\mathbf{A}\one_\delta
\le\frac{d-1}{d}\operatorname{tr}\mathbf{A}
\le\frac{3\delta(d-1)}{2d^2}.
\]
The matrix $\mathbf{M}$ is positive definite and the columns of $\mathbf{W}$ are linearly independent, so $\mathbf{A}$ is positive definite.
Therefore, the $\delta$ eigenvalues of $\mathbf I_\delta+\mathbf{B}^{1/2}\mathbf{A}\mathbf{B}^{1/2}$ are positive,
and their sum is $\delta+\operatorname{tr}(\mathbf{A}\mathbf{B})$.
Applying the arithmetic--geometric mean inequality in~\eqref{eq:grounded-determinant} gives~\eqref{eq:grounded-growth}.
\end{proof}

\begin{proof}[Proof of Theorem~\ref{thm:universal}]
If $n=3$, then $F=K_1$, $G=K_3$, and $Q(G)=4/3$. Assume henceforth that $n\ge4$.
Successively delete a vertex of minimum degree from $F$ until only two vertices remain, and then add the deleted vertices back in reverse order.
Denote the join of order $s$ obtained in this process by $G_s$ for $4\le s\le n$, where $G_n=G$.
After each vertex is added in reverse order, the restored factor is precisely the graph from which that vertex was deleted, so the new vertex has minimum degree in it;
Lemma~\ref{lem:grounded} therefore applies at every step.

Let the degree of the new vertex added in passing from $G_s$ to $G_{s+1}$ be $d$, and write
$b_d=1+3(d-1)/(2d^2)$, $h_d=\lambda_d/b_d^{2d-4}$.
Proposition~\ref{prop:resistance} and Lemma~\ref{lem:grounded} give
\[
\frac{Q(G_{s+1})}{Q(G_s)}
\ge\frac{d^2\lambda_d}{\bigl(\tau(G_{s+1})/\tau(G_s)\bigr)^2}
\ge\left(\frac{s}{s+1}\right)^2h_d.
\]
It remains to verify that, for every integer $d\ge2$, we have $h_d\ge \frac{3}{2}$.
The inequality $2^{d+1}-2>2(2^d-2)$ gives
$\lambda_{d+1}/\lambda_d>2d^2(d+2)/(d+1)^3$; meanwhile, $b_d$ decreases with $d\ge2$.
When $d\ge5$,
\[
\frac{h_d}{h_{d+1}}
=\frac{\lambda_d}{\lambda_{d+1}}
\frac{b_{d+1}^{2d-2}}{b_d^{2d-4}}
<\frac{(d+1)^3}{2d^2(d+2)}\,b_{d+1}^2
\le\frac{108}{175}\left(\frac{29}{24}\right)^2
<1.
\]
Here we have used the fact that $(d+1)^3/(d^2(d+2))$ is decreasing and the bound $b_{d+1}\le b_6=29/24$.
Thus, $h_d$ is increasing for $d\ge5$. Direct calculation gives $h_2=h_3=3/2$ and $h_4,h_5>3/2$,
which proves the desired inequality.

Therefore, every step satisfies
\begin{equation}\label{eq:universal-step}
Q(G_{s+1})\ge Q(G_s)\frac32\left(\frac{s}{s+1}\right)^2.
\end{equation}
The initial graph $G_4$ of order four is either $K_2\vee\overline K_2$ or $K_4$.
The corresponding triples $(\alpha,\alpha^{*},\tau)$ are $(18,6,8)$ and $(24,24,16)$, respectively
\cite[proofs of Lemmas~3.3 and~3.4]{CondeMerino2009}, so $Q(G_4)\ge27/16$.
Combining this with~\eqref{eq:universal-step} gives
\[
Q(G)\ge\frac{27}{16}
\prod_{s=4}^{n-1}\left[\frac32\left(\frac{s}{s+1}\right)^2\right]
=\frac{27}{n^2}\left(\frac32\right)^{n-4}.
\]
\end{proof}

\subsection{Joins of at least three factors}

Taking complements turns the join operation into disjoint union. Thus, a graph can be expressed as a join of at least three factors with nonempty vertex sets
if and only if its complement has at least three components. We now combine Theorems~\ref{thm:pair}
and~\ref{thm:universal} to treat this case.

\begin{corollary}\label{thm:multi}
Let $G$ be a graph of order $n$ whose complement has at least three components. Then $Q(G)>1$.
When $n\ge16$, we have
\begin{equation}\label{eq:multi-bound}
Q(G)\ge\frac{27}{n^2}\left(\frac32\right)^{n-4}.
\end{equation}
\end{corollary}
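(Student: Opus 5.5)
The complement of $G$ has components $C_1,\dots,C_k$ with $k\ge3$. Hence $G=G_1\vee\cdots\vee G_k$, where $G_i$ is the complement of $C_i$. The plan is to group the factors into two sides and apply Theorem~\ref{thm:pair} or Theorem~\ref{thm:universal}, according to whether two vertices can be found that play the role of $K_2$.

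\textbf{Case 1: some factor has order $1$.} Let $G_1=K_1$. Pick a vertex $x$ of $G_1$ and a vertex $y$ of $G_2$. Both are adjacent to all vertices outside their own factor, and $xy$ is an edge. If $G_2$ is also a single vertex, then $\{x,y\}$ induces $K_2$ joined to everything, so $G=K_2\vee F$ with $F=G_3\vee\cdots\vee G_k$.

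In general, $G$ has a universal vertex pair only when two factors are singletons, so the argument splits further.

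\textbf{Case 1a: at least two singleton factors.} Theorem~\ref{thm:universal} gives $Q(G)\ge \frac{27}{n^2}(3/2)^{n-4}$ directly. This quantity exceeds $1$ for all $n\ge4$: it equals $27/16$ at $n=4$, and the ratio of consecutive values is $\frac32\bigl(\frac{n}{n+1}\bigr)^2>1$. The case $n=3$ gives $K_3$ with $Q=4/3$.

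\textbf{Case 2: at least two factors of order at least $3$.} Merge the factors into two sides $A$ and $B$, each of order at least $3$, with $a\le b$. Theorem~\ref{thm:pair} gives $Q\ge D_a\lambda_a^{b-a}(a/b)^{2a-2}$, whose minimum is $D_4>1$, so $Q(G)>1$.

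Since $k\ge3$, the grouping is possible unless almost all vertices sit in one factor together with small factors of order $1$ or $2$.

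\textbf{Remaining configurations.} These are: exactly one singleton factor, all other factors of order at least $2$; factors of order $2$ only; and one large factor together with small ones. For these I would rather use a uniform approach: reduce to $K_2\vee F$ by treating two vertices from different small factors.

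\textbf{Uniform reduction via Theorem~\ref{thm:universal}.} Pick $x\in G_1$ and $y\in G_2$, and delete from $G$ the edges between $x$ (respectively $y$) and the non-neighbours inside its own factor. This yields a graph $G'$ which is a spanning subgraph of $K_2\vee F$, where $F=G-x-y$. Since $G'=K_2\vee F$ minus some edges, I would go the other way: start from $K_2\vee F$ with $F=(G_1-x)\vee(G_2-y)\vee G_3\vee\cdots$, which is still a subgraph of $G$ on the same vertices. Wait — in $G$, $x$ and $y$ are not adjacent to vertices of their own factor, so $G$ is not a supergraph of $K_2\vee F$. Instead, write $G$ as $K_2\vee F$ plus the structure inside factors, interpreting $x,y$ as removed and added back via Lemma~\ref{lem:edge-growth}.

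I expect this step to be the main obstacle. The cleanest rigorous route I see is the following. When some part has order at least $3$ and another part can be formed from the rest with order at least $3$, use Case 2. Otherwise $n\le$ small, or the structure is nearly $K_2\vee F$ (when two singletons exist). The genuinely leftover finite or structured families, such as complete multipartite graphs with parts of order $\le2$, I would handle by direct computation or by the Theorem~\ref{thm:universal} step estimates.

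\textbf{The bound for $n\ge16$.} Compare the two bounds. When $a,b\ge3$, the bound of Theorem~\ref{thm:pair} with $a\le b$, $a+b=n$, grows like $\lambda_a^{b-a}$. Here $\lambda_3=4/3<3/2$, so for $a=3$ the comparison needs the factor $(a/b)^{2a-2}$ together with $D_3=34/15$. Since $\lambda_4=105/64\cdot\ldots>3/2$, larger $a$ is easier. Verifying that this exceeds $\frac{27}{n^2}(3/2)^{n-4}$ once $n\ge16$ is a monotonicity check, done via ratios in $b$ as in Lemma~\ref{lem:step-uniform}, and this is where the threshold $16$ should come from.
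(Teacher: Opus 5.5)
There is a genuine gap: the proof of $Q(G)>1$ is never closed. Your Cases~1a and~2 are correct as far as they go. However, the ``uniform reduction'' fails (as you notice), and the fallback never says which graphs remain or checks them.

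The missing idea is a clean trichotomy. Group the components of $\overline G$ into exactly three factors $A,B,C$ with $v(A)\ge v(B)\ge v(C)$.
\begin{itemize}
\item If $v(A)\ge3$ and $v(B)+v(C)\ge3$, Theorem~\ref{thm:pair} applies to $A\vee(B\vee C)$.
\item If $v(A)\ge3$ and $v(B)=v(C)=1$, then $B\vee C=K_2$ and Theorem~\ref{thm:universal} applies.
\item If $v(A)\le2$, then $n\le6$. A factor with an edge is $K_2$, and two singleton factors join to $K_2$. So only $K_{1,2,2}$ and $K_{2,2,2}$ survive.
\end{itemize}
Thus ``one large factor with small ones'' is not a leftover. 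Nor, in general, are complete multipartite graphs with parts of order at most $2$; for example $K_{2,2,2,2}$ splits as $4+4$.

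The two survivors are not of the form $K_2\vee F$, so nothing from Theorem~\ref{thm:universal} covers them, and you must compute. For instance, start from $K_{1,1,2}$ with $(\alpha,\alpha^*,\tau)=(18,6,8)$. Add a vertex of degree $3$, then one of degree $4$. Lemmas~\ref{lem:acyclic-orientations} and~\ref{lem:cyclic-orientations} give $\alpha\alpha^*\ge2592$ and $\alpha\alpha^*\ge181440$. The Laplacian spectra $(0,3,3,5,5)$ and $(0,4,4,4,6,6)$ give $\tau=45$ and $\tau=384$, so $Q>1$ in both cases.

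A minor slip in Case~1a: $\frac32(\frac{n}{n+1})^2<1$ at $n=4$, so your monotonicity claim needs $n\ge5$. The bound is $81/50$ at $n=5$, so the conclusion survives.

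For $n\ge16$, your plan rests on a miscalculation: $\lambda_3=4\cdot6/9=8/3$, not $4/3$ (and $\lambda_4=35/8$). Had $\lambda_3$ been below $3/2$, the route would be dead. The bound $\frac{34}{15}\lambda_3^{n-6}(3/(n-3))^4$ would then be exponentially smaller than $\frac{27}{n^2}(3/2)^{n-4}$, and no polynomial factor like $(a/b)^{2a-2}$ can compensate.

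With the correct value the approach is viable. You still need a bound that is uniform over all splits $a+b=n$, not monotonicity in $b$ for fixed $a$. The paper obtains it from Corollary~\ref{cor:symmetric}: $Q(A\vee B)>2(8/3)^{n-6}(3/a)^4(3/b)^4\ge2(8/3)^{n-6}(6/n)^8$, using $ab\le n^2/4$. The ratio of this to $\frac{27}{n^2}(3/2)^{n-4}$ is $134217728/129140163>1$ at $n=16$. It is multiplied by $\frac{16}{9}(\frac{n}{n+1})^6>1$ at each step. This symmetric comparison fails at $n=15$, which is where the threshold $16$ comes from.

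Finally, you need the classification above to know that every $G$ with $n\ge16$ is either such a two-sided join or a $K_2\vee F$. The latter case is handled directly by Theorem~\ref{thm:universal}.
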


To obtain Corollary~\ref{thm:multi}, we first give a symmetric lower bound for joins of two factors:

\begin{corollary}\label{cor:symmetric}
Let $a=v(A)\ge3$ and $b=v(B)\ge3$. Then
\[
Q(A\vee B)>
2\left(\frac83\right)^{a+b-6}
\left(\frac3a\right)^4\left(\frac3b\right)^4.
\]
\end{corollary}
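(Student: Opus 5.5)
The plan is to rerun the vertex-by-vertex construction from the proof of Theorem~\ref{thm:pair}, but to compare each step factor of Lemma~\ref{lem:step} with the target per-step factor $\frac83\left(\frac{t}{t+1}\right)^4$ rather than multiplying the factors exactly. The constant $\frac83$ is $\lambda_3$. The factors $\left(\frac{t}{t+1}\right)^4$ telescope along each side: their product over $t=3,\dots,a-1$ is $(3/a)^4$, and over $t=3,\dots,b-1$ it is $(3/b)^4$. There are exactly $a+b-6$ added vertices, which accounts for the power $(8/3)^{a+b-6}$. The seed value $34/15$ from Lemma~\ref{lem:seeds} exceeds $2$ by the factor $17/15$, and I will use this slack both to absorb a deficit at the first step and to make the final inequality strict. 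The statement is symmetric in $a$ and $b$, so I assume $a\le b$.

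\textbf{Step 1 (seed and order of additions).} Choose three vertices from each factor; the induced join has $Q\ge 34/15$ by Lemma~\ref{lem:seeds}. For $j=3,\dots,a-1$, go from orders $(j,j)$ to $(j+1,j+1)$ by adding a vertex first to one side and then to the other. Then add the remaining $b-a$ vertices to the larger side. Each new vertex is adjacent to all of the opposite side (of order $q\ge3$) and to some $r\ge0$ vertices of its own side (of current order $t$). Lemma~\ref{lem:step} therefore gives the factor $g_q(t)=\lambda_q\left(\frac{t}{t+1}\right)^{2q-2}$.

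\textbf{Step 2 (the balanced phase).} At the paired step $j$ I need
\[
g_j(j)\,g_{j+1}(j)\ \ge\ \left(\tfrac83\right)^2\left(\tfrac{j}{j+1}\right)^8 .
\]
A single-step comparison fails at $j=3$: there $g_4(3)\approx0.778$, while the target is $\frac83\left(\frac34\right)^4\approx0.844$. So I treat $j=3$ separately. Since $g_3(3)$ equals its target exactly, the $j=3$ pair falls short only by the factor $\approx0.92$. This is compensated by the seed slack, because $\frac{34}{15}\cdot0.92>2$. For $j\ge4$ I expect the inequality above to hold: numerically the ratios for $j=4$ are about $1.05$ and $1.11$. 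For a general $j$ I would show that
\[
\frac{\lambda_j\lambda_{j+1}}{(8/3)^2}\ \ge\ \left(\frac{j+1}{j}\right)^{4j-10},
\]
using that $\lambda_{j+1}/\lambda_j$ is close to $2$, as in Lemma~\ref{lem:step-uniform}, while the right-hand side is at most about $e^4$ per pair. Comparing the ratios of consecutive terms should give monotonicity beyond $j=5$, and small $j$ would be checked directly.

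\textbf{Step 3 (the unbalanced phase).} When adding to the larger side with $t\ge q=a$, I need
\[
\frac{\lambda_a}{\lambda_3}\ \ge\ \left(\frac{t+1}{t}\right)^{2a-6}.
\]
The right-hand side is maximized at $t=a$, where it is at most $\left(\frac{a+1}{a}\right)^{2a-6}<e^2$. The left-hand side is $1$ when $a=3$, and there the inequality holds with equality. For $a=4$ it is about $1.64>(5/4)^2$, and it grows roughly like $2^a/a$, so a short monotonicity argument (or Lemma~\ref{lem:step-uniform}) settles it.

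\textbf{Step 4 (conclusion) and main obstacle.} Multiplying all the factors gives $Q(A\vee B)$ at least $\frac{34}{15}\cdot0.92\cdots$ times the target expression. The constant in front exceeds $2$, so the inequality in the statement is strict. I expect the main obstacle to be the uniform verification in Step 2 for general $j$, since its margin is thin at $j=4$. If the ratio argument is too crude there, I would verify $j\le 8$ numerically and use the asymptotic ratio estimate only beyond that point.
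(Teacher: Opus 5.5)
Your proposal is correct and follows essentially the paper's argument. It uses the same seed $34/15$, the same single deficit $g_4(3)/g_3(3)=945/1024$ absorbed via $\frac{34}{15}\cdot\frac{945}{1024}>2$, and the same comparison of every other step with $g_3(t)=\frac83\bigl(\frac{t}{t+1}\bigr)^4$ followed by telescoping. The ``main obstacle'' you flag in Step~2 (and the check in Step~3) is already settled: for $t\ge4$ and $q\ge3$, Lemma~\ref{lem:step-uniform} gives $g_q(t)\ge g_3(t)$ step by step, so you need no pairing, ratio argument, or numerical check up to $j\le 8$.
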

\begin{proof}
Choose three vertices from each of $A,B$ to obtain an initial graph on six vertices as in Lemma~\ref{lem:seeds}.
When completing the two sides one vertex at a time, first increase each factor that needs to grow from order three to order four.
By Lemma~\ref{lem:step}, the lower-bound factor for the first step is $g_3(3)$; if both sides need to grow,
the lower-bound factor for the second step is $g_4(3)=(945/1024)g_3(3)$.
Thereafter, the factor being enlarged already has $t\ge4$ vertices at each step, so Lemma~\ref{lem:step-uniform}
gives a corresponding multiplicative factor of at least $g_3(t)$. Since $945/1024<1$, even if only one or neither of the two initial steps is needed,
this coefficient can still be used uniformly as a lower bound. Using $g_3(t)=(8/3)(t/(t+1))^4$, we obtain
\[
Q(A\vee B)\ge\frac{34}{15}\frac{945}{1024}
\prod_{t=3}^{a-1}g_3(t)\prod_{t=3}^{b-1}g_3(t)
>2\left(\frac83\right)^{a+b-6}
\left(\frac3a\right)^4\left(\frac3b\right)^4.
\]
\end{proof}

\begin{proof}[Proof of Corollary~\ref{thm:multi}]
Grouping the components of the complement into three groups, we may write $G=A\vee B\vee C$, where all three factors have nonempty vertex sets,
and assume without loss of generality that $v(A)\ge v(B)\ge v(C)$. If $v(A)\ge3$ and $v(B)+v(C)\ge3$,
we can combine $B,C$ into one factor and apply Theorem~\ref{thm:pair}.
If $v(A)\ge3$ and $v(B)+v(C)=2$, then $B\vee C=K_2$,
and Theorem~\ref{thm:universal} gives $Q(G)>1$. In the remaining case, $v(A)\le2$, and hence $n\le6$. If a factor contains an edge,
it is itself $K_2$; if two factors each consist of a single vertex, their join is also $K_2$.
Both cases are again covered by Theorem~\ref{thm:universal}.
In the remaining cases, all factors have no edges and at most one factor consists of a single vertex, leaving only
$K_{1,2,2}$ and $K_{2,2,2}$.
Starting with $K_{1,1,2}$, for which $(\alpha,\alpha^{*},\tau)=(18,6,8)$,
add a vertex of degree three and then a vertex of degree four. By Lemmas~\ref{lem:acyclic-orientations}
and~\ref{lem:cyclic-orientations}, the values of $\alpha\alpha^{*}$ for the two graphs are at least
$2592$ and $181440$, respectively. Their Laplacian spectra are $(0,3,3,5,5)$
and $(0,4,4,4,6,6)$, respectively, so their numbers of spanning trees are $45$ and $384$, respectively. Thus
$
Q(K_{1,2,2})\ge\frac{2592}{45^2}>1,
Q(K_{2,2,2})\ge\frac{181440}{384^2}>1.
$
This proves the strict inequality for every order.

Let $n\ge16$. By the classification above, $G$ can either be written as a join of two factors each of order at least three
or be written as $K_2\vee F$. In the former case, write $a,b$ for the orders of the two sides, so that $a+b=n$.
By Corollary~\ref{cor:symmetric} and $ab\le n^2/4$, we obtain
\[
Q(G)>2\left(\frac83\right)^{n-6}\left(\frac6n\right)^8.
\]
Consider the ratio of the right-hand side of this inequality to that of~\eqref{eq:multi-bound}.
At $n=16$, it is $134217728/129140163>1$. For $n\ge16$, this ratio grows from $n$ to $n+1$ by the factor
\[
\frac{16}{9}\left(\frac{n}{n+1}\right)^6>1\qquad(n\ge16).
\]
Thus,~\eqref{eq:multi-bound} holds in this case. If $G$ has the form $K_2\vee F$,
the inequality follows directly from~\eqref{eq:universal-bound}.
\end{proof}

\section*{Declaration of AI use}
The author used ChatGPT Astra(OpenAI) for language editing and stylistic suggestions during the preparation of the manuscript.
All mathematical statements, proofs, computations, references, and final wording were independently checked and approved by the author, who takes full responsibility for the content of the article.

\end{document}